\documentclass[11pt]{article}
\usepackage{hyperref}
\usepackage{amsmath,amssymb}
\usepackage{amsthm}
\usepackage{mathrsfs,calrsfs}
\usepackage{mathtools}
\usepackage{epsf}
\usepackage{indentfirst}
\usepackage{latexsym}
\usepackage[dvips]{graphicx}
\usepackage{flafter}
\usepackage{caption}
\usepackage{textcomp}
\usepackage{hhline}
\usepackage{bigstrut,bigdelim}
\usepackage{rotating}
\usepackage{multicol,multirow}
\usepackage{makeidx}
\usepackage{epsfig}
\usepackage{fancyhdr}
\usepackage{booktabs}
\usepackage{mathrsfs}
\usepackage[all]{xy}
\usepackage{xcolor}
\usepackage{microtype}
\usepackage[numbers,sort&compress]{natbib}
\topmargin=-.0in
\textheight=8.5in
\textwidth=6.8in
\oddsidemargin=0in
\pagestyle{plain}
\newif\ifger
\gerfalse
\newtheorem{definition}{Definition}[section]
\newtheorem{theorem}[definition]{Theorem}
\newtheorem{lemma}[definition]{Lemma}
\newtheorem{corollary}[definition]{Corollary}
\newtheorem{remark}{Remark}[section]



\newtheorem*{funding}{Funding}
\newtheorem*{availability of data and material}{Availability of Data and Material}
\newtheorem*{conflict of interest}{Conflict of interest}
\newcommand{\Tr}{{\rm Tr}}

\begin{document}
\baselineskip=19pt

\title{Permutation Polynomials of the form $L(X)+\gamma\Tr_q^{q^3}(h(X))$ \\ over finite fields with even characteristic}

\author{ Xuan Pang$^a$, Danyao Wu$^b$, Pingzhi Yuan$^a${\footnote{Corresponding author}} \\
\small \it  $^a$School of Mathematical Sciences, South China Normal University, \\
\small \it  Guangzhou, 510631, P. R. China\\
\small \it  $^b$School of Computer Science and Technology, Dongguan University of Technology,\\
\small \it   Dongguan 523808, P. R. China \\
}

\date{}
\maketitle
\renewcommand{\thefootnote}{}\footnotetext{E-mail addresses: pangxuan202503@163.com (X. Pang),  wudanyao111@163.com (D. Wu), yuanpz@scnu.edu.cn ( P. Yuan).}

\begin{abstract}
Permutation polynomials over finite fields have extensive applications in various areas. Particularly, permutation polynomials with simple forms are of great interest. In recent papers, several classes of permutation polynomials of the form $L(X)+\Tr_q^{q^3}(h(X))$ have been constructed. This paper further investigates permutation polynomials of such form over $\mathbb{F}_{q^3}$. Unlike previous studies, we transform the problem of constructing univariate permutation polynomials over finite fields into that of constructing corresponding multivariate permutations over $\mathbb{F}_{q}$-vector spaces. Through this approach, we completely characterize a class of permutation polynomials of the form $L(X)+\gamma\Tr_q^{q^3}(c_1X+c_2X^2+c_3X^3+c_4X^{q+2})$ over $\mathbb{F}_{q^3}$, where $q=2^m$, $L(X)=X^q+aX$ and $a,c_1,c_2,c_3,c_4,\gamma\in\mathbb{F}_q$ with $a^2+a+1\neq0$. Furthermore, using a similar method, we generalize several results from a recent work by Jiang, Li and Qu (2026).

\medskip
\noindent{\bf MSC(2020):} 11T06; 11T55.

\medskip
\noindent{\bf Keywords:} Finite field; trace function; permutation polynomial.
\end{abstract}

\section{Introduction}
Let $\mathbb{F}_q$ be a finite field of $q$ elements, where $q$ is a prime power. Every mapping from a finite field into itself can be uniquely represented by a polynomial in $\mathbb{F}_q[X]$ of degree less than $q$. A polynomial  $f(X)\in\mathbb{F}_q[X]$ is called a {\em permutation polynomial} (PP for short) if its induced mapping $f:c\mapsto f(c)$ is a bijection on $\mathbb{F}_q$.  PPs are fundamental objects in finite field theory and have far-reaching applications, including in coding theory \cite{Ding,Ding-Zhou,Laigle-Chapuy}, cryptography \cite{Rivest-Shamir-Adelman,Schwenk-Huber,Singh2009,Singh2020}, combinatorial design theory \cite{Ding-Yuan} and so on. For instance, codes constructed from PPs often have relatively high minimum distance, which endows them with powerful error-correcting capabilities and thus enables reliable communication over noisy channels. It is these properties that continue to attract significant research interest in the theory of PPs over the past few decades.

For a prime power $q$ and a positive integer $n$, define the {\em (relative) trace function} from $\mathbb{F}_{q^n}$ to $\mathbb{F}_q$ by
\begin{equation*}\Tr_{q}^{q^n}(\alpha)= \alpha+\alpha^q+\cdots+\alpha^{q^{n-1}},~{\rm for}~ \alpha\in\mathbb{F}_{q^n}.\end{equation*}
If $q=p$ is a prime, it is referred to as the {\em absolute trace function}. Owing to  its nature as a linear map onto a subfield, the trace function serves as a fundamental tool for constructing PPs \cite{ChKy08,ChKy09,ChKy10,Kyzieve2016,Wang2024,Wu2025,ZXY2015,Zha2019}.

Finding PPs of simple forms or nice algebraic structures remains an appealing yet challenging problem. Among the various classes of PPs, Dickson polynomials stand out as a particularly important class, whose permutation behavior has been well characterized in  \cite{German1968}. Inspired by this,  Hollmann and Xiang \cite{Hollmann05} constructed a class of PPs related to Dickson polynomials,
taking the form $G(X)+\gamma\Tr_2^{2^n}(h(X))$. Subsequently, Charpin and Kyureghyan \cite{ChKy08} used Boolean functions with linear structures to study such form. By considering the case where $G(X)$ is a PP or a linearized polynomial, they constructed six classes of such PPs over $\mathbb{F}_{2^n}$.  Their work was later extended to finite fields of arbitrary characteristic in \cite{ChKy09} via the same linear structure approach.   Further building on this, they exhibited several more such PP classes in \cite{ChKy10} by choosing $G(X)$ and $H(X)$ to be monomials.  Collectively, these works establish an alternative approach for constructing PPs over finite fields by leveraging connections with functions having a linear structure.

Given the inherent interest in PPs with few terms, Kyureghyan and Zieve \cite{Kyzieve2016} investigated polynomials of the form $X+\gamma\Tr_q^{q^n}(X^k)$ over finite fields. 
 From computer search, they determined all PPs over  $\mathbb{F}_{q^n}$ of such form with $\gamma\in\mathbb{F}_{q^n}^*$, $n>1$, $q$ odd, and $q^n<5000$.  Moreover, they constructed nine new classes of PPs to explain almost all examples. These findings motivated Li et al. \cite{LiKQ2018Cry.} to study the analogous form $cX+\Tr_{q}^{q^n}(X^a)$ in even characteristic, leading to the construction of fifteen new classes of PPs. In parallel, Ma and Ge \cite{MagE2017}, along with Zha, Hu and Zhang \cite{Zha2019}, continued the work of \cite{Kyzieve2016} by studying PPs of the generalized form $X+\gamma\Tr_q^{q^n}(h(X))$. Their work not only yielded several new PPs of this type but also accounted for the remaining computational examples proposed by Kyureghyan and Zieve. More recently, Jiang et al. \cite{LikqQulj2025, LikqQulj2026} further focused on PPs of the form $X+\gamma\Tr_q^{q^n}(h(X))$, where $q=2^m$ and $n\in\{2,3\}$. By choosing functions $h(X)$ with a low $q$-degree,  they constructed some new classes of PPs over finite fields. The methods employed in these works are standard: One reduces the permutation property over $\mathbb{F}_{q^n}$ into verifying permutations over specific subgroups (AGW criterion); the other determines the number of solutions to certain equations over finite fields.

In this paper, we further investigate PPs over $\mathbb{F}_{q^3}$ of the form $L(X) + \gamma\operatorname{Tr}_q^{q^3}(h(X))$ in even characteristic, where $L(X)=X+aX^q$ with $a\in\mathbb{F}_q$. Departing from conventional methods, we implement the novel technique proposed by \cite{NewPPs2}.
Specifically, we transform the problem of constructing univariate PPs $f(X)$ over $\mathbb{F}_{q^3}$ into that of constructing multivariate permutation mappings $F = (f_1(x,y,z),  f_2(x,y,z), f_3(x,y,z))$ over $\mathbb{F}_q^3$ (see Proposition \ref{pang-yuan-wu-guan}). Using this approach, we first discuss the permutation behavior of polynomials of the form  $L(X)+\gamma\Tr_q^{q^3}(c_1X+c_2X^2+c_3X^3+c_4X^{q+2})$ over $\mathbb{F}_{q^3}$, where $q=2^m$, $L(X)=X^q+aX$, and $a,c_1,c_2,c_3,c_4,\gamma\in\mathbb{F}_q$ with $a^2+a+1\neq0$, thereby completely characterizing this class of PPs. The second objective of this paper is to slightly  generalize certain results from \cite{LikqQulj2026}, where Jiang, Li, and Qu studied PPs of the form $X+\gamma\Tr_q^{q^3}(h(X))$ over $\mathbb{F}_{q^3}$ with $q=2^m$ and $\gamma=1$. They obtained several new PPs and conjectured that the condition $\gamma=1$ is necessary and sufficient. Our work extends their study to arbitrary $\gamma\in\mathbb{F}_q$, thus offering a partial affirmative answer to this conjecture.

The remaining part of this paper is organized as follows. In Section 2, we present some basic concepts and relevant preliminary results from finite field theory. Most importantly, we find a specific $\mathbb{F}_q$-basis of $\mathbb{F}_{q^3}$, based on existing results on normal bases, that serves as a foundational tool for our subsequent analysis of PPs. In Section 3, we dedicate to studying polynomials of the form $L(x)+\gamma\Tr_{q}^{q^3}(h(X))$. This section is divided into two subsections according to the shape of $h(X)$.  Finally, Section 4 concludes this paper and provides some further work.

\section{Preliminaries}
For brevity, the following notations are fixed throughout this paper.

$\bullet$ $\Tr_{q}^{q^n}(\cdot)$ is the trace function from $\mathbb{F}_{q^n}$ to $\mathbb{F}_{q}$.

$\bullet$ $\mathbb{F}_{q}^*$ denotes the set of nonzero elements of $\mathbb{F}_{q}$. That is $\mathbb{F}_{q}^*=\mathbb{F}_q\backslash\{0\}$.

$\bullet$ $\mathbb{F}_{q}^n$ denotes the $n$-dimensional vector space of $\mathbb{F}_q$.

\medskip

In what follows, we list several basic properties over finite fields.

\begin{lemma}\cite{Lidl}\label{constant}
Let $f(X)\in\mathbb{F}_{q}[X]$. Then $f(X)$ is a PP of $\mathbb{F}_{q}[X]$ if and only if $cf(dX)+b$ is a PP of  $\mathbb{F}_{q}[X]$ for any given $c,d\in\mathbb{F}_{q}^*$ and $b\in\mathbb{F}_q$.
\end{lemma}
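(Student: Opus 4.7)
The plan is to observe that the polynomial $cf(dX)+b$ represents, as a function on $\mathbb{F}_q$, the composition of three maps applied to $f$: precomposition with $X\mapsto dX$, postcomposition with $X\mapsto cX$, and postcomposition with $X\mapsto X+b$. Since a polynomial is a PP exactly when its induced map on $\mathbb{F}_q$ is a bijection, the lemma reduces to checking that each of these three auxiliary maps is a bijection on $\mathbb{F}_q$, and that composition preserves bijectivity.

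For the concrete steps, I would first fix $c,d\in\mathbb{F}_q^*$ and $b\in\mathbb{F}_q$ and verify: (i) the map $\varphi_d:X\mapsto dX$ is a bijection of $\mathbb{F}_q$ with inverse $X\mapsto d^{-1}X$, using $d\in\mathbb{F}_q^*$; (ii) the map $\psi_c:X\mapsto cX$ is a bijection with inverse $X\mapsto c^{-1}X$; and (iii) the translation $\tau_b:X\mapsto X+b$ is a bijection with inverse $X\mapsto X-b$. Then I would note that the induced function of $cf(dX)+b$ is exactly $\tau_b\circ\psi_c\circ f\circ\varphi_d$.

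For the forward direction, if $f$ induces a bijection on $\mathbb{F}_q$ then so does the composition of four bijections, so $cf(dX)+b$ is a PP. For the converse, if $cf(dX)+b$ is a PP, then writing $f(X) = c^{-1}\bigl((cf(dX)+b)|_{X\to d^{-1}X}\bigr)-c^{-1}b$ expresses $f$ itself as a composition of the PP $cf(dX)+b$ with bijections of the same type, so $f$ is a PP. (Equivalently, one just inverts the three auxiliary bijections in the composition.)

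There is no real obstacle here: the content is entirely the group-theoretic fact that the affine maps of $\mathbb{F}_q$ form a group acting on the set of self-maps of $\mathbb{F}_q$, and bijectivity is preserved under this action. The only minor care needed is to phrase the statement on the level of induced functions rather than formal polynomials, since two distinct polynomials of degree $\geq q$ can induce the same function; but the lemma's hypothesis $f(X)\in\mathbb{F}_q[X]$ together with the convention that PP refers to the induced mapping makes this a non-issue.
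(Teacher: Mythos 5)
Your proof is correct and is the standard argument: the paper itself gives no proof of this lemma, citing it directly from Lidl--Niederreiter, and your composition-with-affine-bijections argument is exactly the expected justification. Both directions and the remark about induced functions versus formal polynomials are handled properly, so there is nothing to add.
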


The algebraic structure of $\mathbb{F}_{q^n}$ over $\mathbb{F}_q$ enables the concrete representation of field elements via a chosen basis. The number of distinct bases of $\mathbb{F}_{q^n}$ over $\mathbb{F}_q$ is rather larger (given by the product $\prod_{i=0}^{n-1}(q^n - q^i)$), but there are two special types of bases of particular importance. The first is a polynomial basis $\{1,\alpha,\alpha^2,\dots,\alpha^{n-1}\}$, made up the powers of a defining element $\alpha$ of $\mathbb{F}_{q^n}$ over $\mathbb{F}_q$. The element $\alpha$ is often taken to be a primitive element of $\mathbb{F}_{q^n}$ (see \cite[Theorem 2.10]{Lidl}). Another type of basis is a normal basis defined by a suitable element of $\mathbb{F}_{q^n}$.

\begin{lemma}\cite[Theorem 2.35]{Lidl}\label{normalbasisTheorem}
Let $\mathbb{F}_q$ be a finite field, and let $\mathbb{F}_{q^n}$ be its extension of degree $n$. Then there exists an element $\alpha\in\mathbb{F}_{q^n}^*$ such that the set $\{\alpha,\alpha^q,\dots,\alpha^{q^{n-1}}\}$ forms a basis for $\mathbb{F}_{q^n}$ over $\mathbb{F}_{q}$. Such a basis is called a normal basis, and  the element $\alpha$ is called a normal element.
\end{lemma}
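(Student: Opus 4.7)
The plan is to recast the statement as the existence of a cyclic vector for the Frobenius endomorphism. Consider the $\mathbb{F}_q$-linear operator $\sigma : \mathbb{F}_{q^n} \to \mathbb{F}_{q^n}$, $x \mapsto x^q$; it is $\mathbb{F}_q$-linear because $c^q = c$ for every $c \in \mathbb{F}_q$. Regarding $\mathbb{F}_{q^n}$ as an $\mathbb{F}_q[T]$-module with $T$ acting as $\sigma$, the set $\{\alpha, \sigma(\alpha), \ldots, \sigma^{n-1}(\alpha)\}$ forms an $\mathbb{F}_q$-basis exactly when $\alpha$ generates $\mathbb{F}_{q^n}$ as a cyclic $\mathbb{F}_q[T]$-module. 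So the task is to exhibit such an $\alpha$.

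The first real step is to identify the minimal polynomial $m_\sigma(T) \in \mathbb{F}_q[T]$. One inclusion is immediate: since $x^{q^n} = x$ for every $x \in \mathbb{F}_{q^n}$, we have $\sigma^n = \mathrm{id}$, whence $m_\sigma(T) \mid T^n - 1$. For the reverse, if $m_\sigma$ had degree $k < n$ with $m_\sigma(T) = \sum_{i=0}^{k} c_i T^i$, then every element of $\mathbb{F}_{q^n}$ would be a root of the $q$-polynomial $\sum_{i=0}^{k} c_i X^{q^i} \in \mathbb{F}_q[X]$. This polynomial has degree $q^k < q^n$, so it cannot vanish on all of $\mathbb{F}_{q^n}$, a contradiction. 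Therefore $m_\sigma(T) = T^n - 1$, whose degree equals $\dim_{\mathbb{F}_q}\mathbb{F}_{q^n}$.

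Once the minimal polynomial and the characteristic polynomial coincide, the structure theorem for finitely generated modules over the PID $\mathbb{F}_q[T]$ (equivalently, the rational canonical form) yields an isomorphism $\mathbb{F}_{q^n} \cong \mathbb{F}_q[T]/(T^n - 1)$ of $\mathbb{F}_q[T]$-modules, and any preimage $\alpha$ of the class of $1$ is a cyclic generator. By construction $\alpha \in \mathbb{F}_{q^n}^*$, and $\{\alpha, \sigma(\alpha), \ldots, \sigma^{n-1}(\alpha)\}$ is the desired normal basis.

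The main obstacle, conceptually, is the passage from the minimal-polynomial computation to the existence of a cyclic vector: the statement that a linear operator with $m_\sigma = \chi_\sigma$ admits a cyclic vector is exactly the content of the rational canonical form. If one wishes to avoid invoking it, an alternative route is Artin's linear independence of the characters $\mathrm{id}, \sigma, \ldots, \sigma^{n-1}$ of the Galois group $\mathrm{Gal}(\mathbb{F}_{q^n}/\mathbb{F}_q)$: this independence forces the $n \times n$ matrix $\bigl(\sigma^{i+j}(\alpha)\bigr)_{0 \le i,j \le n-1}$ to be nonsingular for some $\alpha$, and nonsingularity of this matrix is equivalent to the conjugates $\{\sigma^i(\alpha)\}$ being $\mathbb{F}_q$-linearly independent. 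Either route produces the normal element claimed.
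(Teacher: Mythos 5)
The paper offers no proof of this lemma---it is quoted directly from Lidl--Niederreiter, Theorem 2.35---and your cyclic-vector argument (the minimal polynomial of the Frobenius $\sigma$ equals $T^n-1$, which has degree $n=\dim_{\mathbb{F}_q}\mathbb{F}_{q^n}$ and hence coincides with the characteristic polynomial, so $\mathbb{F}_{q^n}$ is a cyclic $\mathbb{F}_q[T]$-module) is exactly the standard proof given in that reference; it is correct and complete. One minor caution about your closing aside: the ``Artin independence'' alternative does not work as directly as you suggest over a finite ground field (the usual argument that some $\alpha$ makes the matrix $\bigl(\sigma^{i+j}(\alpha)\bigr)$ nonsingular relies on a nonzero polynomial having a non-root, which requires an infinite field), but since this is offered only as an alternative, your main proof stands.
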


In what follows, we use Lemma \ref{normalbasisTheorem} to  establish a crucial lemma that underpin our main results.

\begin{lemma}\label{basisfromNB}
Let $\mathbb{F}_q$ be a finite field, and let $\mathbb{F}_{q^n}$ be its extension of degree $n$. If $\gcd(n,q)=1$, then there exists an element $\alpha\in\mathbb{F}_{q^n}^*$ satisfying $\Tr_q^{q^n}(\alpha)=0$ such that $\{1,\alpha,\alpha^q,\dots,\alpha^{q^{n-2}}\}$ forms a basis of $\mathbb{F}_{q^n}$ over $\mathbb{F}_q$.
\end{lemma}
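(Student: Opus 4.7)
The plan is to start from a normal basis $\{\beta,\beta^q,\dots,\beta^{q^{n-1}}\}$ of $\mathbb{F}_{q^n}$ over $\mathbb{F}_q$, which exists by Lemma \ref{normalbasisTheorem}, and to produce the desired $\alpha$ as a suitable additive translate of $\beta$. Set $t=\Tr_q^{q^n}(\beta)\in\mathbb{F}_q$; note that $t\neq 0$, since otherwise the normal basis $\{\beta,\beta^q,\dots,\beta^{q^{n-1}}\}$ would be $\mathbb{F}_q$-linearly dependent. The hypothesis $\gcd(n,q)=1$ is exactly what allows $n$ (viewed as $n\cdot 1$) to be invertible in $\mathbb{F}_q$. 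Then define
\[
\alpha \;=\; \beta-\frac{t}{n}\in\mathbb{F}_{q^n}.
\]
Since $t/n\in\mathbb{F}_q$, a direct computation yields $\Tr_q^{q^n}(\alpha)=\Tr_q^{q^n}(\beta)-n\cdot(t/n)=0$, so the trace condition holds.

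Next I would verify that $\{1,\alpha,\alpha^q,\dots,\alpha^{q^{n-2}}\}$ is a basis. Because $t/n$ is fixed by the Frobenius, one has $\alpha^{q^i}=\beta^{q^i}-t/n$ for every $i\geq 0$. Hence the change-of-basis matrix expressing $(1,\alpha,\alpha^q,\dots,\alpha^{q^{n-2}})$ in terms of $(1,\beta,\beta^q,\dots,\beta^{q^{n-2}})$ is upper triangular with all diagonal entries equal to $1$, so its determinant is $1$. It therefore suffices to show that $\{1,\beta,\beta^q,\dots,\beta^{q^{n-2}}\}$ is itself a basis of $\mathbb{F}_{q^n}$ over $\mathbb{F}_q$.

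For this last step, assume that $c_0+\sum_{i=1}^{n-1}c_i\beta^{q^{i-1}}=0$ with $c_0,\dots,c_{n-1}\in\mathbb{F}_q$. Using the normal basis expansion $1=t^{-1}\bigl(\beta+\beta^q+\cdots+\beta^{q^{n-1}}\bigr)$, substitute to obtain a relation purely in the normal basis:
\[
\sum_{i=0}^{n-2}\bigl(t^{-1}c_0+c_{i+1}\bigr)\beta^{q^i} \;+\; t^{-1}c_0\,\beta^{q^{n-1}} \;=\; 0.
\]
Linear independence of the normal basis forces $t^{-1}c_0=0$, whence $c_0=0$, and then $c_{i+1}=0$ for $i=0,\dots,n-2$. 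This proves the linear independence, and hence completes the argument.

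The only real obstacle is spotting the right translate and noticing where the coprimality hypothesis $\gcd(n,q)=1$ enters — precisely to make $n$ invertible in $\mathbb{F}_q$ so that the trace-killing translation $\beta\mapsto\beta-t/n$ is available. Once this is in place, both the vanishing of the trace and the basis property follow from elementary linear-algebra manipulations on the normal basis, without any calculation beyond those sketched above.
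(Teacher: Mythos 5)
Your proof is correct and follows essentially the same route as the paper: take a normal element $\beta$, translate by $\Tr_q^{q^n}(\beta)/n$ to kill the trace, and note that the resulting change-of-basis matrix is unipotent triangular. The only cosmetic difference is that you establish the intermediate claim that $\{1,\beta,\beta^q,\dots,\beta^{q^{n-2}}\}$ is a basis by proving linear independence directly (and your justification that $\Tr_q^{q^n}(\beta)\neq 0$ via linear independence of the normal basis is, if anything, cleaner than the paper's appeal to surjectivity of the trace), whereas the paper shows the set spans and invokes the dimension count.
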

\begin{proof}
From Lemma \ref{normalbasisTheorem}, we choose a normal element $\theta\in\mathbb{F}_{q^3}^*$ such that $\{\theta,\theta^q,\cdots,\theta^{q^{n-1}}\}$ forms a normal basis of $\mathbb{F}_{q^n}$ over $\mathbb{F}_q$. Since the trace function is onto, we have $\Tr_q^{q^n}(\theta)=\theta+\theta^q+\cdots+\theta^{q^{n-1}}=c$ for some nonzero $c\in\mathbb{F}_q$. This gives  $\theta^{q^{n-1}}=c-\theta-\theta^q-\cdots-\theta^{q^{n-2}}$. Hence $\{1,\theta,\theta^q\cdots,\theta^{q^{n-2}}\}$ spans $\mathbb{F}_{q^n}$. Consequently, $\{1, \theta, \theta^q,\cdots,\theta^{q^{n-2}}\}$ is necessarily a basis since the dimension of $\mathbb{F}_{q^n}$ over $\mathbb{F}_q$ is $n$. Assume now that $\alpha=\theta-\frac{c}{n}$. Then we have
\begin{equation*}
\begin{pmatrix}
1    & 0 &0 & \dots &0 \\
-c/n & 1 &0 &\dots & 0 \\
-c/n & 0 & 1 &\dots & 0\\
\vdots&\vdots &\vdots &  & \vdots \\
-c/n & 0 & 0& \dots &1\\
\end{pmatrix}
\begin{pmatrix} 1 \\ \theta \\ \theta^q \\ \vdots \\ \theta^{q^{n-2}}
\end{pmatrix}
=
\begin{pmatrix}
1 \\ \alpha \\ \alpha^q \\ \vdots \\ \alpha^{q^{n-2}}
\end{pmatrix}.
\end{equation*}
Since the matrix is clearly invertible over $\mathbb{F}_q$, the set $\{1,\alpha,\alpha^q,\dots,\alpha^{q^{n-2}}\}$ is also a basis of $\mathbb{F}_{q^n}$ over $\mathbb{F}_q$.  Moreover, a straightforward computation shows that $\Tr_q^{q^n}(\alpha)=0$. This completes the proof.
\end{proof}

Setting $n=3$ in Lemma \ref{basisfromNB} yields the following corollary.

\begin{corollary}\label{basis}
Let $\mathbb{F}_q$ be a finite field with $3\nmid q$.
Then for any degree $3$ extension $\mathbb{F}_{q^3}$ over $\mathbb{F}_q$, there exists an element $\alpha\in\mathbb{F}_{q^3}$ satisfying $\Tr_q^{q^3}(\alpha)=0$ such that $\{1,\alpha,\alpha^q\}$ forms a basis of $\mathbb{F}_{q^3}$ over $\mathbb{F}_q$.
\end{corollary}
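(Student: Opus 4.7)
The plan is to obtain the corollary as a direct specialization of Lemma \ref{basisfromNB} to the case $n = 3$. First I would verify that the hypothesis $\gcd(n,q) = 1$ of the lemma reduces to the hypothesis $3 \nmid q$ of the corollary: since $3$ is prime, $\gcd(3,q) = 1$ holds if and only if $q$ is not divisible by $3$, equivalently the characteristic $p$ of $\mathbb{F}_q$ is different from $3$. This is exactly the stated hypothesis $3 \nmid q$.

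With this reduction in place, the conclusion of Lemma \ref{basisfromNB} applied with $n=3$ asserts the existence of $\alpha \in \mathbb{F}_{q^3}^*$ satisfying $\Tr_q^{q^3}(\alpha) = 0$ such that the set $\{1, \alpha, \alpha^q, \dots, \alpha^{q^{n-2}}\}$ forms a basis of $\mathbb{F}_{q^n}$ over $\mathbb{F}_q$. For $n = 3$ this truncated set is simply $\{1, \alpha, \alpha^q\}$, which is precisely the basis required by the corollary.

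No real obstacle is expected, since the corollary is purely a specialization. The only subtlety inherited from the proof of the lemma is the division by $n$ in the construction $\alpha = \theta - c/n$ obtained from a normal element $\theta$; for $n = 3$ this is division by $3$ in $\mathbb{F}_q$, which is legitimate exactly when $3 \nmid q$, consistent with the stated hypothesis. Hence the corollary follows immediately from Lemma \ref{basisfromNB} by setting $n = 3$.
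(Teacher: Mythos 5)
Your proposal is correct and coincides with the paper's own derivation: the corollary is obtained by setting $n=3$ in Lemma \ref{basisfromNB}, with $\gcd(3,q)=1$ being exactly the hypothesis $3\nmid q$ and the truncated set reducing to $\{1,\alpha,\alpha^q\}$. Your remark about the division by $n=3$ being the source of this hypothesis is an accurate reading of where the condition is actually used in the lemma's proof.
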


The finite field $\mathbb{F}_{q^n}$ is clearly isomorphic to $\mathbb{F}_q^n$. Therefore, studying the permutation behavior of a polynomial $f(X)\in\mathbb{F}_{q^n}[X]$ is equivalent to analyzing the behavior of its corresponding multivariate polynomial mapping $F=(f_1(X_1, \dots, X_n), \dots, f_n(X_1, \dots, X_n))$ over $\mathbb{F}_q^n$.

\begin{lemma}\cite{NewPPs2}\label{pang-yuan-wu-guan}
Let $\{\alpha_1, \alpha_2, \dots, \alpha_n\}$ and $\{\beta_1, \dots, \beta_n\}$  be two bases of $\mathbb{F}_{q^n}$ over $\mathbb{F}_q$. For a polynomial $f(X)\in\mathbb{F}_{q^n}[X]$ and $a_i\in \mathbb{F}_q$ for $1\le i\le n$, we let
\begin{equation*}X=(X_1+a_1, \ldots, X_n+a_n)(\alpha_1, \dots, \alpha_n)^T, \end{equation*}
\begin{equation*}f(X)=(f_1(X_1, \dots, X_n), \dots, f_n(X_1, \dots, X_n))(\beta_1, \dots, \beta_n)^T+c,\end{equation*}
where $c\in\mathbb{F}_{q^n}$. Then $f(X)$ is a PP over $\mathbb{F}_{q^n}$ if and only if $(f_1(X_1, \dots, X_n), \dots, f_1(X_1, \dots, X_n))$ is a permutation of $\mathbb{F}_q^n$.
\end{lemma}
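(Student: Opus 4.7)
The plan is to view both displayed equations as compositions of an $\mathbb{F}_q$-linear coordinate isomorphism with an affine translation, so that the stated identity becomes an equality of maps $\mathbb{F}_q^n \to \mathbb{F}_{q^n}$ in which the left side is bijective iff $f$ is a PP and the right side is bijective iff the multivariate map $F=(f_1,\dots,f_n)$ permutes $\mathbb{F}_q^n$. Once set up this way, the equivalence becomes the trivial fact that pre- and post-composing a map with bijections preserves (non)bijectivity.

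First I would introduce the coordinate maps $\phi_\alpha, \phi_\beta : \mathbb{F}_q^n \to \mathbb{F}_{q^n}$ defined by $\phi_\alpha(Y_1,\dots,Y_n)=\sum_{i=1}^n Y_i\alpha_i$ and $\phi_\beta(Y_1,\dots,Y_n)=\sum_{i=1}^n Y_i\beta_i$. Since $\{\alpha_1,\dots,\alpha_n\}$ and $\{\beta_1,\dots,\beta_n\}$ are $\mathbb{F}_q$-bases of $\mathbb{F}_{q^n}$, both $\phi_\alpha$ and $\phi_\beta$ are $\mathbb{F}_q$-linear bijections. I would then introduce the translation $T_a(X_1,\dots,X_n)=(X_1+a_1,\dots,X_n+a_n)$, a bijection of $\mathbb{F}_q^n$, and $S_c(z)=z+c$, a bijection of $\mathbb{F}_{q^n}$.

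Next I would rewrite the two displayed equations in the statement as $X=\phi_\alpha\bigl(T_a(X_1,\dots,X_n)\bigr)$ and $f(X)=S_c\bigl(\phi_\beta(F(X_1,\dots,X_n))\bigr)$ with $F=(f_1,\dots,f_n):\mathbb{F}_q^n\to\mathbb{F}_q^n$. Combining them gives the identity
\begin{equation*}
f\circ\phi_\alpha\circ T_a \;=\; S_c\circ\phi_\beta\circ F
\end{equation*}
of maps from $\mathbb{F}_q^n$ to $\mathbb{F}_{q^n}$. Since $\phi_\alpha\circ T_a$ is a bijection from $\mathbb{F}_q^n$ onto $\mathbb{F}_{q^n}$, the left-hand side is a bijection if and only if $f$ is a permutation of $\mathbb{F}_{q^n}$; since $S_c\circ\phi_\beta$ is a bijection from $\mathbb{F}_q^n$ onto $\mathbb{F}_{q^n}$, the right-hand side is a bijection if and only if $F$ is a permutation of $\mathbb{F}_q^n$. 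Equating these two criteria yields the lemma.

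There is no substantive obstacle; the only thing to be careful about is bookkeeping, namely matching the row-vector times column-vector notation $(Y_1,\dots,Y_n)(\alpha_1,\dots,\alpha_n)^T=\sum Y_i\alpha_i$ used in the statement with the coordinate map $\phi_\alpha$, and verifying that the translation components $a_i$ and the constant $c$ really act as bijections on the correct spaces so that they can be absorbed into the pre- and post-composition.
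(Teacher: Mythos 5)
Your argument is correct and complete: writing the coordinate identification as $f\circ\phi_\alpha\circ T_a=S_c\circ\phi_\beta\circ F$ and observing that $\phi_\alpha\circ T_a$ and $S_c\circ\phi_\beta$ are bijections (because $\{\alpha_i\}$ and $\{\beta_i\}$ are bases and translations are invertible) immediately gives that $f$ permutes $\mathbb{F}_{q^n}$ if and only if $F=(f_1,\dots,f_n)$ permutes $\mathbb{F}_q^n$. Note that the present paper does not actually prove this lemma --- it is imported from the reference \cite{NewPPs2} --- so there is no in-paper proof to compare against; your decomposition-into-bijections argument is the standard and expected one, and the only cleanup needed is to read the statement's $(f_1,\dots,f_1)$ as the obvious typo for $(f_1,\dots,f_n)$, which you have implicitly done.
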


\section{PPs of the form $L(X)+\gamma\Tr_q^{q^3}(h(X))$ over $\mathbb{F}_{q^3}$}

In this section, we are devoted to constructing several classes of PPs over $\mathbb{F}_{q^3}$ the form $X+aX^q+\gamma\Tr_q^{q^3}(h(X))$, where $q=2^m$ and  $\gamma\in\mathbb{F}_q$. We begin by deriving the following key lemma from  Corollary \ref{basis} and Lemma \ref{pang-yuan-wu-guan}.

\begin{lemma}\label{lem3-1}
Let $q=2^m$ and $\alpha\in\mathbb{F}_{q^3}^*$ such that $\Tr_q^{q^3}(\alpha)=0$ and $\{1,\alpha,\alpha^q\}$ is a basis of $\mathbb{F}_{q^3}$ over $\mathbb{F}_q$. Consider the polynomial
\begin{equation*}f(X)=X+aX^q+\gamma\Tr_q^{q^3}(h(X)),\end{equation*}
where $a,\gamma\in\mathbb{F}_q$ with $a^2+a+1\neq0$. Let
$X=x+y\alpha+z\alpha^q$ with $x,y,z\in\mathbb{F}_q.$ Then $f(X)$ is a PP over $\mathbb{F}_{q^3}$ if and only if for each $y,z\in\mathbb{F}_q$, \begin{equation*}g(x,y,z)=(a+1)x+\gamma\Tr_q^{q^3}(h(x+y\alpha+z\alpha^q))\end{equation*} is a PP of $\mathbb{F}_{q}$.
\end{lemma}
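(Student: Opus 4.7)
The plan is to apply Lemma \ref{pang-yuan-wu-guan} with $n=3$ and the common basis $\{\alpha_1,\alpha_2,\alpha_3\}=\{\beta_1,\beta_2,\beta_3\}=\{1,\alpha,\alpha^q\}$, whose existence is guaranteed by Corollary \ref{basis}, taking the shifts $a_1=a_2=a_3=0$ and the constant $c=0$. First I would expand $f(X)$ in coordinates with respect to this basis. Since $\Tr_q^{q^3}(\alpha)=\alpha+\alpha^q+\alpha^{q^2}=0$ and the characteristic is $2$, the key relation $\alpha^{q^2}=\alpha+\alpha^q$ holds, so for $X=x+y\alpha+z\alpha^q$ with $x,y,z\in\mathbb{F}_q$ one has $X^q=x+z\alpha+(y+z)\alpha^q$. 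Therefore
\begin{equation*}
X+aX^q=(a+1)x+(y+az)\alpha+(ay+(a+1)z)\alpha^q,
\end{equation*}
while $\gamma\Tr_q^{q^3}(h(X))\in\mathbb{F}_q$ contributes only to the coefficient of $1$.

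Thus the coordinate functions of $f$ in the chosen basis are
\begin{align*}
f_1(x,y,z)&=(a+1)x+\gamma\Tr_q^{q^3}(h(x+y\alpha+z\alpha^q))=g(x,y,z),\\
f_2(x,y,z)&=y+az,\\
f_3(x,y,z)&=ay+(a+1)z.
\end{align*}
The decisive observation is that $(f_2,f_3)$ does not involve $x$ and is the $\mathbb{F}_q$-linear map on $(y,z)$ whose matrix $\bigl(\begin{smallmatrix} 1 & a \\ a & a+1 \end{smallmatrix}\bigr)$ has determinant $a^2+a+1\neq 0$; hence it induces a bijection of $\mathbb{F}_q^2$.

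By Lemma \ref{pang-yuan-wu-guan}, $f$ is a PP of $\mathbb{F}_{q^3}$ if and only if $F=(f_1,f_2,f_3)$ permutes $\mathbb{F}_q^3$. Because $(y,z)\mapsto(f_2,f_3)$ is already a bijection independent of $x$, a standard fiber argument then shows that $F$ is bijective precisely when, for every fixed $(y,z)\in\mathbb{F}_q^2$, the univariate map $x\mapsto g(x,y,z)$ is a permutation of $\mathbb{F}_q$, which is exactly the stated condition. I foresee no serious obstacle in this argument: all the substantive ingredients, namely the existence of the basis, the multivariate reformulation, the identity $\alpha^{q^2}=\alpha+\alpha^q$, and the nonsingularity of the $(y,z)$-block, are already supplied by Corollary \ref{basis}, Lemma \ref{pang-yuan-wu-guan}, and the hypothesis $a^2+a+1\neq 0$.
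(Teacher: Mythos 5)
Your proposal is correct and follows essentially the same route as the paper: expand $f$ in the basis $\{1,\alpha,\alpha^q\}$ via $\alpha^{q^2}=\alpha+\alpha^q$, observe that the $(y,z)$-coordinates of the image form an invertible linear map (determinant $a^2+a+1\neq0$) independent of $x$, and reduce by Lemma \ref{pang-yuan-wu-guan} to the first coordinate permuting $\mathbb{F}_q$ for each fixed $(y,z)$. Your write-up is in fact slightly cleaner, since the paper's proof contains a typo referring to the pair $(f_1,f_2)$ where $(f_2,f_3)$ is meant.
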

\begin{proof}
Since $X=x+y\alpha+z\alpha^q$, $\Tr_q^{q^3}(\alpha)=0$ and $\gamma\in\mathbb{F}_q$,  the polynomial
\begin{align*}
f(X)=f(x+y\alpha+z\alpha^q)&=x+y\alpha+z\alpha^q+a(x+y\alpha+z\alpha^q)^q+\gamma\Tr_q^{q^3}(h(x+y\alpha+z\alpha^q))\\
 &=(a+1)x+\gamma\Tr_q^{q^3}\left(h(x+y\alpha+z\alpha^q)\right)+(y+az)\alpha+(z+ay+az)\alpha^q.
\end{align*}
Clearly $\Tr_q^{q^3}(h(x+y\alpha+z\alpha^q))\in\mathbb{F}_q$. We observe that $f_2(x,y,z)=y+az$ and $f_3(x,y,z)=z+ay+az$ are both independent of $x$, and the pair $(f_1,f_2)$ forms a permutation polynomial pair of $\mathbb{F}_{q}^2$ under the condition $a^2+a+1\neq0$. We thus conclude that $(f_1(x,y,z),f_2(x,y,z),f_3(x,y,z))$  permutes $\mathbb{F}_q^3$ if and only if $f_1(x,y,z)$  permutes $\mathbb{F}_q$. Therefore, $f(X)$ is a PP of $\mathbb{F}_{q^3}$ if and only if for every $y, z \in \mathbb{F}_q$, the polynomial $f_1(x,y,z)$, i.e., $g(x,y,z)$ is a PP of $\mathbb{F}_q$.
\end{proof}

As an application of Lemma \ref{lem3-1}, we present some results about the permutation properties for polynomials of the form $f(X)=X+aX^q+\gamma\Tr_q^{q^3}(h(X))$.


\begin{theorem}\label{tm3.2}
Let $q=2^m$ and $a,\gamma\in\mathbb{F}_q$ with $a^2+a+1\neq0$. A polynomial $f(X)=X+aX^q+\gamma\Tr_q^{q^3}(h(X))$ is a PP of $\mathbb{F}_{q^3}$ if and only if $g(X)=X+aX^q+\gamma\Tr_q^{q^3}\left(h(X)+X^{2^i+2^jq}+X^{2^iq+2^j}\right)$ is a PP of $\mathbb{F}_{q^3}$, where $i$ and $j$ are  non-negative integers.
\end{theorem}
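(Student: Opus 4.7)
The plan is to apply Lemma~\ref{lem3-1} to both $f(X)$ and $g(X)$ using the same basis $\{1,\alpha,\alpha^q\}$ of $\mathbb{F}_{q^3}/\mathbb{F}_q$ with $\Tr_q^{q^3}(\alpha)=0$ (which exists by Corollary~\ref{basis}, since $\gcd(q,3)=1$), and then to show that the extra trace term
\[
\Delta(x,y,z):=\Tr_q^{q^3}\bigl(X^{2^i+2^jq}+X^{2^iq+2^j}\bigr),\qquad X=x+y\alpha+z\alpha^q,
\]
is independent of $x$. Once this is established, for each fixed $y,z\in\mathbb{F}_q$ the two one-variable polynomials in $x$ produced by Lemma~\ref{lem3-1} for $f$ and for $g$ differ only by the constant $\gamma\Delta\in\mathbb{F}_q$, so one permutes $\mathbb{F}_q$ if and only if the other does, and the theorem follows at once.

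The $x$-independence of $\Delta$ rests on three basic facts. First, since $x\in\mathbb{F}_q$ we have $x^q=x$, so $x^{2^k q}=x^{2^k}$ for every $k\geq 0$. Second, in characteristic~$2$ Freshman's dream gives $(x+W)^{2^k}=x^{2^k}+W^{2^k}$, where $W:=y\alpha+z\alpha^q$. Third, the choice of basis forces $\Tr_q^{q^3}(W)=y\Tr_q^{q^3}(\alpha)+z\Tr_q^{q^3}(\alpha^q)=0$. Rewriting $X^{2^i+2^jq}+X^{2^iq+2^j}=X^{2^i}X^{2^jq}+X^{2^iq}X^{2^j}$ and expanding with Freshman's dream, the pure-$x$ contributions $x^{2^i+2^j}$ arise exactly twice and cancel, leaving mixed terms $x^{2^i}(W^{2^j}+W^{2^jq})+x^{2^j}(W^{2^i}+W^{2^iq})$ together with a $W$-only remainder $W^{2^i+2^jq}+W^{2^iq+2^j}$.

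Applying $\Tr_q^{q^3}$ and pulling out the $\mathbb{F}_q$-scalars $x^{2^i},x^{2^j}$, Frobenius-invariance of the trace, $\Tr_q^{q^3}(U^q)=\Tr_q^{q^3}(U)$, reduces each mixed bracket to $\Tr_q^{q^3}(W^{2^k}+W^{2^kq})=2\Tr_q^{q^3}(W^{2^k})=0$ in characteristic~$2$. Hence $\Delta(x,y,z)=\Tr_q^{q^3}(W^{2^i+2^jq}+W^{2^iq+2^j})$ depends on $y,z$ alone, which is exactly what is needed.

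I do not anticipate a serious obstacle: once Lemma~\ref{lem3-1} is invoked, the statement collapses to a short characteristic-$2$ computation, and the cancellation is forced by the special shape of the exponents $2^i+2^jq$ and $2^iq+2^j$ together with the vanishing of $\Tr_q^{q^3}(W)$ built into the basis. The only care required is consistent bookkeeping with the Frobenius twists $q,q^2$ and, when using Frobenius-invariance of the trace, keeping track of the characteristic-$2$ identity $2\equiv 0$ that kills the cross terms.
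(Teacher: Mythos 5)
Your proof is correct and follows essentially the same route as the paper: reduce via Lemma~\ref{lem3-1} to a one-variable permutation condition over $\mathbb{F}_q$ for each fixed $y,z$, and show that $\Tr_q^{q^3}\bigl(X^{2^i+2^jq}+X^{2^iq+2^j}\bigr)$ is independent of $x$ so that it only shifts the one-variable polynomial by a constant. The sole cosmetic difference is bookkeeping: the paper kills the cross terms $x^{2^k}(\cdots)$ inside each summand separately using $\Tr_q^{q^3}(W)=0$ and then cancels the two copies of $x^{2^i+2^j}$ in characteristic $2$, while you cancel the pure-$x$ terms first and dispose of the cross terms via Frobenius-invariance of the trace; the two computations are equivalent.
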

\begin{proof} By  Corollary \ref{basis}, there exists an element $\alpha\in\mathbb{F}_{q^3}^*$ satisfying $\Tr_q^{q^3}(\alpha)=0$ such that $\{1,\alpha,\alpha^q\}$ is a basis of $\mathbb{F}_{q^3}$ over $\mathbb{F}_q$.
Let $X=x+y\alpha+z\alpha^q$ with $x,y,z\in\mathbb{F}_q$.  Then
 \begin{align*}
 X^{2^i+2^jq}&=(x+y\alpha+z\alpha^q)^{2^i}(x+y\alpha^q+z\alpha^{q^2})^{2^j}\\
      &=x^{2^i+2^j}+x^{2^i}(y\alpha^q+z\alpha^{q^2})^{2^j}+x^{2^j}(y\alpha+z\alpha^q)^{2^i}+(y\alpha+z\alpha^q)^{2^i}(y\alpha^q+z\alpha^{q^2})^{2^j}.
 \end{align*}
Taking the trace function on both sides of the above expression, we obtain $\Tr_q^{q^3}(X^{2^i+2^jq})=x^{2^i+2^j}+\ell(y,z),$
where $\ell(y,z)=\Tr_q^{q^3}\left((y\alpha+z\alpha^q)^{2^i}(y\alpha^q+z\alpha^{q^2})^{2^j}\right)\in\mathbb{F}_q[y,z]$ is a constant with respect to $x$. By analogy, we have $\Tr_q^{q^3}(X^{2^iq+2^j})=x^{2^i+2^j}+\ell'(y,z)$, where $\ell'(y,z)=\Tr_q^{q^3}\left((y\alpha+z\alpha^q)^{2^j}(y\alpha^q+z\alpha^{q^2})^{2^i}\right)\in\mathbb{F}_q[y,z]$.
Therefore, we deduce that
\begin{align*}
g(X)&=g(x+y\alpha+z\alpha^q)\\
 &=x+y\alpha+z\alpha^q+a(x+y\alpha+z\alpha^q)^q+\gamma\left(\Tr_q^{q^3}(h(x+y\alpha+z\alpha^q))+\ell(y,z)+\ell'(y,z)\right)\\
  &=(a+1)x+\gamma\left(\Tr_q^{q^3}(h(x+y\alpha+z\alpha^q))+\ell(y,z)+\ell'(y,z)\right)+(y+az)\alpha+(z+ay+az)\alpha^q.
\end{align*}
It follows from Lemma \ref{lem3-1} that $g(X)$ is a PP of $\mathbb{F}_{q^3}$ if and only if for each $y,z\in\mathbb{F}_q$,
\begin{equation*}
(a+1)x+\gamma\left(\Tr_q^{q^3}(h(x+y\alpha+z\alpha^q))+\ell(y,z)+\ell'(y,z)\right)
\end{equation*} permutes $\mathbb{F}_q$.  This is equivalent to, for each $y,z\in\mathbb{F}_q$, the polynomial $(a+1)x+\gamma\Tr_q^{q^3}(h(x+y\alpha+z\alpha^q))$ being a permutation of $\mathbb{F}_{q}$, as both $\gamma\ell(y,z)$ and $\gamma\ell'(y,z)$ are independent of $x$. Consequently, $g(X)$ permutes $\mathbb{F}_{q^3}$ if and only if  $(a+1)x+\gamma\Tr_q^{q^3}(h(x+y\alpha+z\alpha^q))$ permutes $\mathbb{F}_{q}$ for any $y,z\in\mathbb{F}_q$. A similar argument applied to $f(X)$ gives the same necessary and sufficient conditions.  We thus complete the proof.
\end{proof}

Observing the result above, we see that if we modify the exponent by replacing $2^i+2^jq$  with $2^i+2^j$ while keeping the rest of the polynomial structure unchanged, a similar equivalence for PPs still holds. This leads to the following theorem:

\begin{theorem} \label{tm3.3}
Let $q=2^m$ and $a,\gamma\in\mathbb{F}_q$ with $a^2+a+1\neq0$. A polynomial $f(X)=X+aX^q+\gamma\Tr_q^{q^3}(h(X))$ is a PP of $\mathbb{F}_{q^3}$ if and only if $g(X)=X+aX^q+\gamma\Tr_q^{q^3}\left(h(X)+X^{2^i+2^j}+X^{2^iq+2^j}\right)$ is a PP of $\mathbb{F}_{q^3}$, where $i$ and $j$ are  non-negative integers.
\end{theorem}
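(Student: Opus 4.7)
The plan is to follow exactly the scheme of the proof of Theorem \ref{tm3.2}: use the basis $\{1,\alpha,\alpha^q\}$ of $\mathbb{F}_{q^3}$ from Corollary \ref{basis} (with $\Tr_q^{q^3}(\alpha)=0$), write $X=x+y\alpha+z\alpha^q$, and show that adding the new trace term in passing from $f$ to $g$ produces only a quantity that depends on $(y,z)$ alone. Once that is established, Lemma \ref{lem3-1} applied to both $f$ and $g$ reduces each permutation question to a one-variable problem for $(a+1)x$ plus something in $x$, the two differing only by an additive constant in $x$, and Lemma \ref{constant} yields the equivalence.

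To carry out the expansion, I would set $U=y\alpha+z\alpha^q$ and $V=U^q=y\alpha^q+z\alpha^{q^2}$, so that $X=x+U$ and $X^q=x+V$. The freshman's dream in characteristic $2$ gives
\begin{equation*}
X^{2^i+2^j} + X^{2^iq+2^j} = (x^{2^i}+U^{2^i})(x^{2^j}+U^{2^j}) + (x^{2^i}+V^{2^i})(x^{2^j}+U^{2^j}).
\end{equation*}
After expansion, the contributions $x^{2^i+2^j}$ and $x^{2^i}U^{2^j}$ each appear twice and vanish, leaving
\begin{equation*}
X^{2^i+2^j} + X^{2^iq+2^j} = x^{2^j}(U^{2^i}+V^{2^i}) + U^{2^i+2^j} + V^{2^i}U^{2^j}.
\end{equation*}

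The main obstacle, and the crucial step, is to verify that $\Tr_q^{q^3}(U^{2^i}+V^{2^i})=0$, so that the only remaining $x$-dependent piece disappears after applying the trace. This reduces to the elementary identity $\Tr_q^{q^3}(w^{2^i}) = \bigl(\Tr_q^{q^3}(w)\bigr)^{2^i}$, valid in characteristic $2$ because Frobenius commutes with the trace and $(a+b+c)^{2^i}=a^{2^i}+b^{2^i}+c^{2^i}$, combined with $\Tr_q^{q^3}(U)=\Tr_q^{q^3}(V)=0$, which in turn follows from $\Tr_q^{q^3}(\alpha)=\Tr_q^{q^3}(\alpha^q)=\Tr_q^{q^3}(\alpha^{q^2})=0$.

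With this cancellation, $\Tr_q^{q^3}\bigl(X^{2^i+2^j}+X^{2^iq+2^j}\bigr)=\ell(y,z)$ for some $\ell(y,z)\in\mathbb{F}_q[y,z]$, and therefore $g(X)=f(X)+\gamma\ell(y,z)$. Applying Lemma \ref{lem3-1} to $g$, the polynomial $g$ is a PP of $\mathbb{F}_{q^3}$ if and only if $(a+1)x+\gamma\Tr_q^{q^3}(h(X))+\gamma\ell(y,z)$ permutes $\mathbb{F}_q$ for every $(y,z)\in\mathbb{F}_q^2$; since $\gamma\ell(y,z)$ is independent of $x$, Lemma \ref{constant} shows this is equivalent to the corresponding condition for $f$, completing the proof.
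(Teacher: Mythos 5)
Your proof is correct and is exactly the argument the paper intends: Theorem \ref{tm3.3} is stated without proof as the analogue of Theorem \ref{tm3.2}, and your computation (showing the added trace term contributes only a quantity depending on $(y,z)$, via $\Tr_q^{q^3}(U)=\Tr_q^{q^3}(V)=0$ and the Frobenius--trace commutation) is the same cancellation the paper carries out for $X^{2^i+2^jq}+X^{2^iq+2^j}$, merely organized by factoring out $(x^{2^j}+U^{2^j})$ before applying the trace rather than tracing each summand separately. The final reduction via Lemma \ref{lem3-1} and Lemma \ref{constant} matches the paper's scheme.
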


Building on Theorems \ref{tm3.2}-\ref{tm3.3}, we can further generalize the conclusion as follows.
\begin{corollary}
Let $q=2^m$ and $a,\gamma\in\mathbb{F}_q$ with $a^2+a+1\neq0$. A polynomial \begin{equation*}f(X)=X+aX^q+\gamma\Tr_q^{q^3}(h(X))\end{equation*} is a PP of $\mathbb{F}_{q^3}$ if and only if \begin{equation*}g(X)=X+aX^q+\gamma\Tr_q^{q^3}\left(h(X)+\sum_{i,j}c_{ij}\left(X^{2^i+2^j}+X^{2^iq+2^j}\right)+\sum_{s,t}d_{st}\left(X^{2^s+2^tq}+X^{2^sq+2^t}\right)\right)\end{equation*} is a PP of $\mathbb{F}_{q^3}$, where $c_{ij}, d_{st}\in\mathbb{F}_{q^3}$, and $i,j,s,t$ are non-negative integers.
\end{corollary}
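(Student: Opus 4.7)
The plan is to reduce the corollary to Theorems \ref{tm3.2} and \ref{tm3.3} by induction on the total number of summands $N$ in the expression $\sum_{i,j} c_{ij}\bigl(X^{2^i+2^j}+X^{2^iq+2^j}\bigr) + \sum_{s,t} d_{st}\bigl(X^{2^s+2^tq}+X^{2^sq+2^t}\bigr)$. The base case $N=0$ is trivial since $g=f$. For the inductive step, I would peel off one summand, apply the appropriate one-summand statement (with coefficient $c_{ij}$ or $d_{st}$ in $\mathbb{F}_{q^3}$), and invoke the inductive hypothesis on the remaining $N-1$ summands. The whole corollary then reduces to two one-summand biconditionals: $f(X)+\gamma\Tr_q^{q^3}\!\bigl(c(X^{2^i+2^j}+X^{2^iq+2^j})\bigr)$ is a PP of $\mathbb{F}_{q^3}$ if and only if $f$ is, and likewise for $f(X)+\gamma\Tr_q^{q^3}\!\bigl(d(X^{2^s+2^tq}+X^{2^sq+2^t})\bigr)$, for arbitrary $c,d\in\mathbb{F}_{q^3}$ and arbitrary non-negative integers $i,j,s,t$.

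Each one-summand biconditional is attacked via Corollary \ref{basis} together with Lemma \ref{lem3-1}: choose $\alpha\in\mathbb{F}_{q^3}^*$ with $\Tr_q^{q^3}(\alpha)=0$ and $\{1,\alpha,\alpha^q\}$ a basis, write $X=x+y\alpha+z\alpha^q$ with $x,y,z\in\mathbb{F}_q$, and observe (as in the proof of Lemma \ref{lem3-1}) that $f$ is a PP of $\mathbb{F}_{q^3}$ iff, for every fixed $(y,z)\in\mathbb{F}_q^2$, the map $x\mapsto (a+1)x+\gamma\Tr_q^{q^3}(h(X))$ is a PP of $\mathbb{F}_q$. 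Adding an $\mathbb{F}_q$-valued quantity that is independent of $x$ (for fixed $(y,z)$) amounts to a vertical shift that preserves PP status. So the one-summand claim follows once I show that $\Tr_q^{q^3}\!\bigl(c(X^{2^i+2^j}+X^{2^iq+2^j})\bigr)$ and $\Tr_q^{q^3}\!\bigl(d(X^{2^s+2^tq}+X^{2^sq+2^t})\bigr)$ are, as functions on $\mathbb{F}_q^3$, independent of $x$. The concrete verification would set $A=y\alpha+z\alpha^q$, $B=y\alpha^q+z\alpha^{q^2}$, $C=y\alpha^{q^2}+z\alpha$, exploit $A+B+C=0$ (from $\Tr_q^{q^3}(\alpha)=0$) together with the cyclic Frobenius action $A^q=B$, $B^q=C$, $C^q=A$, and expand via the characteristic-two binomial, paralleling the computations that drive the proofs of Theorems \ref{tm3.2} and \ref{tm3.3}.

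The main obstacle is precisely that $c,d$ are permitted to lie in $\mathbb{F}_{q^3}$ rather than $\mathbb{F}_q$: since $\Tr_q^{q^3}$ is only $\mathbb{F}_q$-linear, the trace $\Tr_q^{q^3}(cY)=cY+c^qY^q+c^{q^2}Y^{q^2}$ blends Frobenius conjugates of both $c$ and $Y$, and the $x$-independence that holds for $\Tr_q^{q^3}(Y)$ does not automatically persist for $\Tr_q^{q^3}(cY)$. To address this I would decompose $c=c_0+c_1\alpha+c_2\alpha^q$ with $c_0,c_1,c_2\in\mathbb{F}_q$ and reduce the verification to the three building blocks $\Tr_q^{q^3}(Y)$, $\Tr_q^{q^3}(\alpha Y)$, and $\Tr_q^{q^3}(\alpha^q Y)$ with $Y=X^{2^i+2^j}+X^{2^iq+2^j}$ (and the analogous $Y$ for the $d$-type pair). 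For each of these, I would collect the coefficients of the monomials $x^{2^i}$, $x^{2^j}$, $x^{2^i+2^j}$ and attempt to verify their vanishing using $A+B+C=0$ together with the characteristic-two identity $(A+B+C)^{2^k}=A^{2^k}+B^{2^k}+C^{2^k}=0$ and the cyclic Frobenius action on $\{A,B,C\}$. This step is the crux of the argument: if the cancellations go through uniformly for all three building blocks $1,\alpha,\alpha^q$, the one-summand biconditional follows and the induction closes, yielding the corollary; if not, the cancellation pattern will reveal precisely what algebraic structure on $c_{ij},d_{st}\in\mathbb{F}_{q^3}$ is in fact used by the proof.
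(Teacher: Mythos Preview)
Your plan---induction on the number of summands, reducing each one-summand biconditional to Lemma \ref{lem3-1}, and checking that the added trace term is independent of $x$ for fixed $(y,z)$---is exactly the route the paper intends (it offers no proof beyond ``Building on Theorems \ref{tm3.2}--\ref{tm3.3}''). Your caution about the coefficients $c_{ij},d_{st}\in\mathbb{F}_{q^3}$ is the right instinct, and in fact it pinpoints a genuine error in the statement rather than a gap in your argument.

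Carry out the computation you outline. With $X=x+A$, $A=y\alpha+z\alpha^q$, $B=A^q$, $C=A^{q^2}$ and $A+B+C=0$, one gets in characteristic~$2$
\[
X^{2^i+2^j}+X^{2^iq+2^j}=(A^{2^i}+B^{2^i})(x^{2^j}+A^{2^j})=x^{2^j}C^{2^i}+A^{2^j}C^{2^i},
\]
so $\Tr_q^{q^3}\!\bigl(c(X^{2^i+2^j}+X^{2^iq+2^j})\bigr)=x^{2^j}\,\Tr_q^{q^3}(cC^{2^i})+\text{(term free of $x$)}$. For $c\in\mathbb{F}_q$ this is independent of $x$ since $\Tr_q^{q^3}(C)=0$; for general $c\in\mathbb{F}_{q^3}$ it is not. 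Taking $i=0$ and $c=\alpha$ gives $\Tr_q^{q^3}(\alpha C)=y\,\Tr_q^{q^3}(\alpha^{q^2+1})+z\,\Tr_q^{q^3}(\alpha^{2})=y\,\Tr_q^{q^3}(\alpha^{q+1})$, which is typically nonzero. The $d$-type pair behaves the same way, with two surviving $x$-terms $x^{2^s}\Tr_q^{q^3}(dC^{2^t})+x^{2^t}\Tr_q^{q^3}(dC^{2^s})$. Concretely, the corollary fails over $\mathbb{F}_8$: with $q=2$, $a=0$, $\gamma=1$, $h=0$, $i=j=0$ and $c=\omega$ a primitive element, $f(X)=X$ is a PP but $g(X)=X+\Tr_2^{8}\!\bigl(\omega(X^{2}+X^{3})\bigr)$ satisfies $g(\omega)=g(\omega^{3})=\omega^3$.

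So the hypothesis should read $c_{ij},d_{st}\in\mathbb{F}_q$. Under that correction your argument goes through immediately: $\Tr_q^{q^3}(cY)=c\,\Tr_q^{q^3}(Y)$ for $c\in\mathbb{F}_q$, the $x$-independence established inside the proofs of Theorems \ref{tm3.2}--\ref{tm3.3} transfers verbatim, and your induction closes.
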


\subsection{PPs of the form $L(X)+\gamma\Tr_q^{q^3}(c_1X+c_2X^2+c_3X^3+c_4X^{q+2})$}

In a recent study, Jiang, Li and Qu \cite{LikqQulj2025} investigated  PPs over $\mathbb{F}_{q^2}$ (where $q=2^m$) of the form \begin{equation*}f(X)=X+\gamma\Tr_q^{q^2}(c_1X+c_2X^2+c_3X^3+c_4X^{q+2}),\end{equation*}
where $\gamma\in\mathbb{F}_{q^2}$ and $c_i\in\{0,1\}$ for $1\leq i\leq4$.
Their work, aided by Magma computations, led to the complete classification of six classes of PPs with such a form. Motivated by their work, we carry out an analogous investigation for the cases over $\mathbb{F}_{q^3}$. Specifically, we focus on polynomials of the form
$X+aX^q+\gamma\Tr_q^{q^3}(c_1X+c_2X^2+c_3X^3+c_4X^{q+2})$ over $\mathbb{F}_{q^3}$, where $q=2^m$, and $a,c_i,\gamma\in\mathbb{F}_q$ for $1\leq i\leq4$, with the condition $a^2+a+1\neq0$.


\begin{theorem}
Let $q=2^m$, and let $a,c_i,\gamma\in\mathbb{F}_q$ for $1\leq i\leq4$, where $a^2+a+1\neq0$. Then the polynomial
\begin{equation*}f(X)=X+aX^q+\gamma\Tr_q^{q^3}(c_1X+c_2X^2+c_3X^3+c_4X^{q+2})\end{equation*} is a PP of $\mathbb{F}_{q^3}$ if and only if one of the following holds:
 \begin{enumerate}
 \item[{\rm(i)}]  $\gamma(c_3+c_4)=0$, $\gamma c_2=0$ and $a+1+\gamma c_1\neq0$;
 \item[{\rm(ii)}]  $\gamma(c_3+c_4)=0$, $\gamma c_2\neq0$ and $a+1+\gamma c_1=0$;
  \item[{\rm(iii)}] $\gamma(c_3+c_4)\neq0$,  $\gamma(c_1c_3+c_1c_4+c_2^2)=(a+1)(c_3+c_4)$ and $m$ is odd.
 \end{enumerate}
\end{theorem}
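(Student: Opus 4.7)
The plan is to apply Lemma~\ref{lem3-1} and reduce the question to a univariate permutation problem over $\mathbb{F}_q$. Fix an element $\alpha\in\mathbb{F}_{q^3}$ as in Corollary~\ref{basis}, so that $\{1,\alpha,\alpha^q\}$ is an $\mathbb{F}_q$-basis of $\mathbb{F}_{q^3}$ with $\Tr_q^{q^3}(\alpha)=0$, and write $X=x+y\alpha+z\alpha^q$. To invoke Lemma~\ref{lem3-1} I must compute $\Tr_q^{q^3}(X^k)$ for $k\in\{1,2,3,q+2\}$ and isolate the $x$-dependent part. The linear and quadratic pieces are immediate: $\Tr_q^{q^3}(X)=x$ from $\Tr_q^{q^3}(\alpha)=\Tr_q^{q^3}(\alpha^q)=0$, and since squaring commutes with the trace in characteristic~$2$, $\Tr_q^{q^3}(\alpha^2)=\Tr_q^{q^3}(\alpha)^2=0$ and similarly $\Tr_q^{q^3}(\alpha^{2q})=0$, giving $\Tr_q^{q^3}(X^2)=x^2$.

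For the cubic pieces, introduce $N=\Tr_q^{q^3}(\alpha^3)$, $A=\Tr_q^{q^3}(\alpha^{q+2})$ and $B=\Tr_q^{q^3}(\alpha^{2q+1})$, all of which lie in $\mathbb{F}_q$. Expanding $X^3=X\cdot X^2$ and $X^{q+2}=X^q\cdot X^2$, applying $\mathbb{F}_q$-linearity of the trace, and using the identity $A+B=N$ (which drops out of $(\alpha+\alpha^q+\alpha^{q^2})^3=0$ in characteristic~$2$ via Newton's formula), routine bookkeeping yields
\begin{equation*}
\Tr_q^{q^3}(X^3)=x^3+N(y^3+z^3)+Ay^2z+Byz^2,\qquad\Tr_q^{q^3}(X^{q+2})=x^3+A(y^3+z^3)+By^2z+Nyz^2.
\end{equation*}
Consequently all $x$-dependent contributions to $g(x,y,z)=(a+1)x+\gamma\Tr_q^{q^3}(h(X))$ collect into
\begin{equation*}
P(x):=\gamma(c_3+c_4)x^3+\gamma c_2 x^2+\bigl[(a+1)+\gamma c_1\bigr]x,
\end{equation*}
and by Lemma~\ref{lem3-1} the polynomial $f$ is a PP of $\mathbb{F}_{q^3}$ if and only if $P$ is a PP of $\mathbb{F}_q$.

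The remaining step is a case analysis of when $P$ permutes $\mathbb{F}_q$. If $\gamma(c_3+c_4)=0$ and $\gamma c_2=0$, then $P$ is $\mathbb{F}_q$-linear and PPness is $(a+1)+\gamma c_1\neq 0$, giving~(i). If $\gamma(c_3+c_4)=0$ but $\gamma c_2\neq 0$, then $P(x_1)+P(x_2)=(x_1+x_2)\bigl[\gamma c_2(x_1+x_2)+(a+1)+\gamma c_1\bigr]$ in characteristic~$2$, so PPness forces the bracket to have no nonzero root, giving~(ii). For the remaining case $A':=\gamma(c_3+c_4)\neq 0$ (with $B':=\gamma c_2$ and $C':=(a+1)+\gamma c_1$), using $x_1^3+x_2^3=(x_1+x_2)[(x_1+x_2)^2+x_1x_2]$ gives
\begin{equation*}
P(x_1)+P(x_2)=(x_1+x_2)\bigl[A'(x_1+x_2)^2+B'(x_1+x_2)+A'x_1x_2+C'\bigr].
\end{equation*}
With $u=x_1+x_2\in\mathbb{F}_q^*$ and $v=x_1x_2$, a collision corresponds to $v=u^2+(B'/A')u+C'/A'$ together with $t^2+ut+v$ splitting over $\mathbb{F}_q$, i.e., $\Tr_2^{2^m}(v/u^2)=0$. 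Using $\Tr_2^{2^m}(\lambda^2)=\Tr_2^{2^m}(\lambda)$, this becomes $m+\Tr_2^{2^m}(\delta/u)=0$ in $\mathbb{F}_2$ with $\delta=B'/A'+\sqrt{C'/A'}$. As $\delta/u$ traverses $\mathbb{F}_q^*$ whenever $\delta\neq 0$ and the absolute trace is not constant on $\mathbb{F}_q^*$, $P$ is a PP precisely when $\delta=0$ and $m$ is odd; unwinding $\delta=0$ to $A'C'+B'^2=0$ and dividing through by $\gamma$ yields the equality in~(iii). The main obstacle is this cubic case, where extracting the trace condition and then accounting simultaneously for the vanishing of $\delta$ and the parity of $m$ requires the small detour through the $\mathbb{F}_2$-trace identities; the rest of the argument is just organised accounting once the two cubic trace formulas above are in hand.
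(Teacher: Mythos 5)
Your proof is correct, and the reduction itself (Corollary~\ref{basis}, writing $X=x+y\alpha+z\alpha^q$, observing that the $x^2$- and $x$-cross-terms of $\Tr_q^{q^3}(X^3)$ and $\Tr_q^{q^3}(X^{q+2})$ die because $\Tr_q^{q^3}(\alpha)=0$, and invoking Lemma~\ref{lem3-1} to land on $P(x)=\gamma(c_3+c_4)x^3+\gamma c_2x^2+(a+1+\gamma c_1)x$) is exactly the paper's; you are merely more explicit about the $x$-free part, which you compute in terms of $N,A,B$ while the paper simply absorbs it into a constant $\ell(y,z)$ (and the identity $A+B=N$ is true but not actually needed). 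The genuine divergence is in the cubic case: the paper normalizes $P$ to the shape $x^3+bx$ and cites Table~7.1 of Lidl--Niederreiter, whereas you resolve it from scratch by factoring $P(x_1)+P(x_2)$, parametrizing collisions by $u=x_1+x_2$, $v=x_1x_2$, and applying the Artin--Schreier splitting criterion $\Tr_2^{2^m}(v/u^2)=0$, which cleanly produces both the coefficient condition $\delta=0$ (equivalently $A'C'+B'^2=0$, i.e.\ the equality in~(iii)) and the parity condition on $m$ in one stroke. Your route is longer but self-contained and arguably more illuminating about \emph{why} both conditions appear; the paper's is shorter by outsourcing the classification of degree-$3$ PPs. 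One microscopic imprecision: the justification ``the absolute trace is not constant on $\mathbb{F}_q^*$'' fails for $q=2$, where $\mathbb{F}_2^*=\{1\}$; the conclusion still holds there because $m=1$ is odd and $\Tr_2^2(1)=1$ already forces a collision when $\delta\neq0$, but you should phrase the non-constancy claim for $q\geq4$ and treat $q=2$ by inspection.
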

\begin{proof}
 Let $\alpha\in\mathbb{F}_{q^3}^*$ with $\Tr_q^{q^3}(\alpha)=0$ such that $\{1,\alpha,\alpha^q\}$ is a basis of $\mathbb{F}_{q^3}$ over $\mathbb{F}_q$. Writing $X = x + y\alpha + z\alpha^q$ for $x, y, z \in \mathbb{F}_q$, we have
\begin{align*}
X^3&=x^3+(y\alpha+z\alpha^q)x^2+(y\alpha+z\alpha^q)^2x+(y\alpha+z\alpha^q)^3;\\
X^{q+2}&=(x+y\alpha^q+z\alpha^{q^2})(x+y\alpha+z\alpha^q)^2\\
  &=x^3+(y\alpha^q+z\alpha^{q^2})x^2+(y\alpha+z\alpha^q)^2x+(y\alpha^q+z\alpha^{q^2})(y\alpha+z\alpha^q)^2.
\end{align*}
Hence,
\begin{align*}
f(X)&=f(x+y\alpha+z\alpha^q)\\
  &=(a+1)x+(y+az)\alpha+(z+ay+az)\alpha^q+\gamma\left((c_3+c_4)x^3+c_2x^2+c_1x+\ell(y,z)\right),
\end{align*}
where $\ell(y,z)=\Tr_q^{q^3}\left(c_3(y\alpha+z\alpha^q)^3+c_4(y\alpha^q+z\alpha^{q^2})(y\alpha+z\alpha^q)^2\right)\in\mathbb{F}_q[y,z]$.
According to Lemma \ref{lem3-1}, $f(X)$ is a PP of $\mathbb{F}_{q^3}$ if and only if for every $y,z\in\mathbb{F}_q$, the polynomial
\begin{equation*}
g(x,y,z)-\gamma \ell(y,z)=(a+1)x+\gamma\left((c_3+c_4)x^3+c_2x^2+c_1x\right)\end{equation*}
permutes $\mathbb{F}_q$, since $\gamma \ell(y,z) \in \mathbb{F}_{q}[y,z]$ is independent of $x$. 
Next we divide the analysis into two cases.

{\bf Case 1:} Assume $\gamma(c_3+c_4)=0$. Then $g(x,y,z)-\gamma \ell(y,z)=\gamma c_2x^2+(a+1+\gamma c_1)x$. It is well known that such a quadratic polynomial permutes $\mathbb{F}_q$ if and only if
either $\gamma c_2=0$ and $a+1+\gamma c_1\neq0$, or $\gamma c_2\neq0$ and $a+1+\gamma c_1=0$.

{\bf Case 2:} Assume $\gamma(c_3+c_4)\neq0$. We apply Table 7.1 from \cite{Lidl} to write $g(x, y, z)-\gamma \ell(y, z)$ in its normalized form:
\begin{equation*}x^3+\left(\frac{c_2^2}{(c_3+c_4)^2}+\frac{c_1}{c_3+c_4}+\frac{a+1}{\gamma(c_3+c_4)}\right)x.\end{equation*}
Then this polynomial permutes $\mathbb{F}_{q}$ if and only if $q\not\equiv1\pmod3$ and  \begin{equation*}\gamma(c_1c_3+c_1c_4+c_2^2)=(a+1)(c_3+c_4),\end{equation*} where
the equation is a simplified form of
\begin{equation*}\frac{c_2^2}{(c_3+c_4)^2}+\frac{c_1}{c_3+c_4}+\frac{a+1}{\gamma(c_3+c_4)}=0.\end{equation*}
This completes the proof.
\end{proof}

\subsection{PPs of the form $X+\gamma\Tr_q^{q^3}(h(X))$ extended from Table \ref{table1} }

 Jiang, Li and Qu  studied the permutation properties of polynomials having $X+\gamma\Tr_q^{q^3}(h(X))$ with $\gamma=1$ in \cite[Section 4]{LikqQulj2026}, where $q=2^m$ and $h(X)$ is chosen as a binomial (see Table \ref{table1}). From this investigation, they obtained some new PPs over $\mathbb{F}_{q^3}$. In this part, we extend these findings to the more general form $X + \gamma \Tr_q^{q^3}(h(X))$ with $\gamma \in \mathbb{F}_q$. Our proofs are based on the multivariate method.  We first point out a fundamental fact that will be used throughout this subsection. If $\{\alpha_1,\dots,\alpha_n\}$ is a basis of $\mathbb{F}_{q^n}$ over $\mathbb{F}_{q}$, then the set $\{\alpha_1^p,\dots,\alpha_n^p\}$ is also a basis, where $p$ is the characteristic of $\mathbb{F}_q$.

 \begin{table}[htbp]
  \centering
  \caption{PPs of the form  $X+\Tr_q^{q^3}(h(X))$
  over $\mathbb{F}_{q^3}$ \\
  with $q=2^m$ and $h(X)$ as a binomial in \cite{LikqQulj2026}}\label{table1}
  \renewcommand{\arraystretch}{0.9}  
  \begin{tabular}{lll}
  \toprule
  {\rm No.} & $f(X)$ & {\rm Conditions} \\
  \hline
 1& $X+\Tr_q^{q^3}(X^{\frac{q^2+q}{2}}+X^{\frac{q^2+q+2}{2}})$  &{\rm all} $m$ \\
 2& $X+\Tr_q^{q^3}(X^{\frac{q^2+q}{2}}+X^{\frac{q^2-q+1}{2}})$  &{\rm all} $m$ \\
3& $X+\Tr_q^{q^3}(X^{\frac{q^2+q}{2}}+X^{2q+1})$  &$m$ {odd}\\
4& $X+\Tr_q^{q^3}(X^{2q+1}+X^{4q+1})$  &$m$ {odd}\\
5& $X+\Tr_q^{q^3}(X^{2q+2}+X^{4q+1})$  &$m\not\equiv0\pmod4$\\
6& $X+\Tr_q^{q^3}(X^{\frac{q^2+q}{2}}+X^{\frac{q^2+3q}{2}})$  &$m\not\equiv2\pmod3$ \\
7& $X+\Tr_q^{q^3}(X^{\frac{q^2+q}{2}}+X^{\frac{3q^2+q}{2}})$  &$m\not\equiv1\pmod3$ \\
\bottomrule
\end{tabular}
\end{table}

To facilitate the subsequent analysis, we first present the following Lemma.
\begin{lemma}\label{lem3-4}
Let $q=2^m$ and $\alpha\in\mathbb{F}_{q^3}^*$ such that $\Tr_q^{q^3}(\alpha)=0$ and $\{1,\alpha,\alpha^q\}$ is a basis of $\mathbb{F}_{q^3}$ over $\mathbb{F}_q$.  For any $X \in \mathbb{F}_{q^3}$, write $X=x+y\alpha+z\alpha^q$ with $x,y,z\in\mathbb{F}_q$. Then the following hold:
\begin{enumerate}
\item[{\rm(i)}]  $\Tr_q^{q^3}(X^{q+1})=x^2+(y^2+z^2+yz)\Tr_q^{q^3}(\alpha^{q+1})$;
\item[{\rm(ii)}] $\Tr_q^{q^3}(X^{q^2+q+2})=x^4+x^2(y^2+z^2+yz)\Tr_q^{q^3}(\alpha^{q+1})+x\Tr_q^{q^3}\left((y\alpha+z\alpha^q)^3\right)$;
\item[{\rm(iii)}] $\Tr_q^{q^3}(X^{2q+1})=x^3+\Tr_q^{q^3}\left((z\alpha+y\alpha^q+z\alpha^q)^2(y\alpha+z\alpha^q)\right)$;
\item[{\rm(iv)}]  $\Tr_q^{q^3}(X^{4q+1})=x^5+\Tr_q^{q^3}\left((z\alpha+y\alpha^q+z\alpha^q)^4(y\alpha+z\alpha^q)\right)$;
\item[{\rm(v)}]  $\Tr_q^{q^3}(X^{q+3})=x^4+x^2(y^2+z^2+yz)\Tr_q^{q^3}(\alpha^{q+1})+x\ell_{51}(y,z)+\ell_{52}(y,z)$;
\item[{\rm(vi)}]
 $\Tr_q^{q^3}(X^{3q+1})=x^4+x^2(y^2+z^2+yz)\Tr_q^{q^3}(\alpha^{q+1})+x\ell_{61}(y,z)+\ell_{62}(y,z)$;
\end{enumerate}
where $\ell_{51}(y,z)$, $\ell_{52}(y,z)$, $\ell_{61}(y,z)$  and $\ell_{62}(y,z)$ are constants in $\mathbb{F}_q$ with respect to $x$, given by 
\begin{equation*}\ell_{51}(y,z)=\Tr_q^{q^3}\left((y\alpha+z\alpha^q)^2(y\alpha^q+z\alpha^{q^2})+(y\alpha+z\alpha^q)^3\right);\end{equation*}
\begin{equation*}\ell_{52}(y,z)=\Tr_q^{q^3}\left((y\alpha^q+z\alpha^{q^2})(y\alpha+z\alpha^q)^3\right);\end{equation*}
\begin{equation*}\ell_{61}(y,z)=\Tr_q^{q^3}\left((y\alpha^q+z\alpha^{q^2})^2(y\alpha+z\alpha^q)+(y\alpha^q+z\alpha^{q^2})^3\right);\end{equation*}
\begin{equation*}\ell_{62}(y,z)=\Tr_q^{q^3}\left((y\alpha^q+z\alpha^{q^2})^3(y\alpha+z\alpha^q)\right).\end{equation*}
\end{lemma}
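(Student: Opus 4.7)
The plan is to prove each of (i)--(vi) by direct expansion of $X=x+y\alpha+z\alpha^q$ in the basis $\{1,\alpha,\alpha^q\}$. Throughout I would write $Y=y\alpha+z\alpha^q$ and abbreviate $\Tr:=\Tr_q^{q^3}$, and I would use the Frobenius conjugates $X^q=x+y\alpha^q+z\alpha^{q^2}$ and $X^{q^2}=x+y\alpha^{q^2}+z\alpha=x+(y+z)\alpha+y\alpha^q$ (from $\alpha^{q^2}=\alpha+\alpha^q$). Three elementary facts drive every simplification: (a) $\mathbb{F}_q$-linearity of $\Tr$; (b) the Frobenius-invariance $\Tr(\beta^q)=\Tr(\beta)$, so elementary-trace terms depend only on the Frobenius orbit of the exponent; (c) in characteristic $2$, $\Tr(\beta^{2^k})=(\Tr\beta)^{2^k}$, which specializes to $\Tr(\alpha^{2^k})=0$ for all $k\geq 0$ and to $\Tr(X)=x$.

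Parts (i) and (ii) admit short proofs. For (i), expanding $X^{q+1}=X\cdot X^q$ gives $x^2$ from the diagonal, $x$-linear cross terms that vanish under $\Tr$ by (c), and the $x$-free part $\Tr(YY^q)$; the four monomial contributions to $\Tr(YY^q)$ collapse via (b) to three copies of $\Tr(\alpha^{q+1})$ plus a $\Tr(\alpha^{2q})=0$ term, yielding $(y^2+z^2+yz)\Tr(\alpha^{q+1})$. For (ii), the key shortcut is $X^{q^2+q+2}=X\cdot N(X)$ with $N(X)=X^{1+q+q^2}\in\mathbb{F}_q$, which gives $\Tr(X^{q^2+q+2})=x\cdot N(X)$; it then suffices to expand $N(X)=(x+Y)(x+Y^q)(x+Y^{q^2})$ into elementary symmetric polynomials of $(Y,Y^q,Y^{q^2})$. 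The $x^2$-coefficient is $e_1=\Tr(Y)=0$, the $x$-coefficient is $e_2=\Tr(YY^q)=(y^2+z^2+yz)\Tr(\alpha^{q+1})$ by (i), and the constant is $e_3=N(Y)$. Newton's identity in characteristic $2$, $p_3=e_1p_2+e_2p_1+e_3$, combined with $e_1=p_1=0$, gives $N(Y)=\Tr(Y^3)$, completing (ii).

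Parts (iii)--(vi) follow the same template without the norm shortcut. I would factor each exponent into simple Frobenius powers --- $X^{2q+1}=X\cdot(X^q)^2$, $X^{4q+1}=X\cdot(X^q)^4$, $X^{q+3}=X^2\cdot X^{q+1}$, $X^{3q+1}=X^{q+1}\cdot X^{2q}$ --- and expand. The leading $x^{2^k}$-term survives as the stated power of $x$; any $x$-mixed term whose $\alpha$-coefficient is a sum of Frobenius conjugates of $\alpha$ vanishes under $\Tr$ by (c); the $x^2\cdot(y^2+z^2+yz)\Tr(\alpha^{q+1})$ piece in (v) and (vi) arises from a $\Tr(YY^q)$-subfactor handled as in (i); and the $x^1$- and $x^0$-pieces in $y,z$ are read off as the claimed $\Tr(\cdots)$, $\ell_{i1}(y,z)$, and $\ell_{i2}(y,z)$. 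When two sides of an equality feature traces over distinct Frobenius orbits, the relation $(\Tr\alpha)^n=0$ reconciles them; for instance $(\Tr\alpha)^3=0$ yields the useful identity $\Tr(\alpha^3)+\Tr(\alpha^{2q+1})+\Tr(\alpha^{q+2})=0$, which is exactly what is needed to match the LHS and RHS of (iii).

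The main obstacle is the combinatorial bookkeeping in (v) and (vi): unlike (ii), the exponents $q+3$ and $3q+1$ do not collapse via a symmetric-function identity, so the decomposition $\ell_{i1}(y,z)\,x+\ell_{i2}(y,z)$ must be extracted from the expansion term by term. The cleanest strategy is to separate each product $X^a\cdot X^b$ into $x$-homogeneous pieces, apply $\Tr$ to each piece in isolation, and match with the advertised $\ell_{ij}$. No new idea is required beyond what parts (i)--(iv) use; the remaining task is purely careful bookkeeping with Frobenius orbits and repeated invocation of $\Tr(\alpha^{2^k})=0$.
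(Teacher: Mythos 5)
Your proposal is correct and follows essentially the same route as the paper: expand $X=x+y\alpha+z\alpha^q$ and its Frobenius conjugates directly, kill the $x$-mixed terms via $\Tr_q^{q^3}(\alpha^{2^k})=0$, and use the norm identity $\Tr_q^{q^3}(X^{q^2+q+2})=x\,X^{q^2+q+1}$ for part (ii), with (iii)--(vi) handled by the same bookkeeping the paper declares ``closely analogous'' and omits. Your explicit Newton's-identity justification of $N(Y)=\Tr_q^{q^3}(Y^3)$ when $\Tr_q^{q^3}(Y)=0$ is a small but welcome addition to a step the paper merely asserts.
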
\begin{proof}
The proofs of these cases are closely analogous. For brevity, we present the details only for the first two. By our choice of $\alpha$, every element $X\in\mathbb{F}_{q^3}$ can be uniquely expressed as $X=x+y\alpha+z\alpha^q$ with $x, y, z \in \mathbb{F}_q$. A direct computation shows:
\begin{equation*}\begin{aligned}
X^{q+1}&=(x+y\alpha^q+z\alpha^{q^2})(x+y\alpha+z\alpha^q)\\
 &=x^2+\left(y\alpha+y\alpha^q+z\alpha\right)x+(y\alpha+z\alpha^q)(y\alpha^q+z\alpha^q+z\alpha);\\
 &=x^2+\left(y\alpha+y\alpha^q+z\alpha\right)x+\left(yz\alpha^2+(y^2+z^2+yz)\alpha^{q+1}+(yz+z^2)\alpha^{2q}\right).
\end{aligned}\end{equation*}
\begin{equation}\begin{aligned}\label{X^q^2+q+1}
X^{q^2+q+1}&=(x+y\alpha+z\alpha^q)(x+y\alpha^q+z\alpha^{q^2})(x+y\alpha^{q^2}+z\alpha)\\
           &=x^3+x(y^2+z^2+yz)(\alpha^{2q}+\alpha^{q+1}+\alpha^{2})+(y\alpha+z\alpha^q)(y\alpha^q+z\alpha^{q^2})(y\alpha^{q^2}+z\alpha)\\
           &=x^3+x(y^2+z^2+yz)\Tr_q^{q^3}(\alpha^{q+1})+\Tr_q^{q^3}\left((y\alpha+z\alpha^q)^3\right).
\end{aligned}\end{equation}
Applying the trace function to both sides, and noting that $\Tr_q^{q^3}(\alpha) = 0$, it follows that
\begin{equation*}\begin{aligned}
\Tr_q^{q^3}(X^{q+1})=x^2+(y^2+z^2+yz)\Tr_q^{q^3}(\alpha^{q+1}).
\end{aligned}\end{equation*}
\begin{align*}
\Tr_q^{q^3}(X^{q^2+q+2})&=X^{q^2+q+1}\Tr_q^{q^3}(X)=x^4+x^2(y^2+z^2+yz)\Tr_q^{q^3}(\alpha^{q+1})+x\Tr_q^{q^3}\left((y\alpha+z\alpha^q)^3\right),
\end{align*}
where we use the fact that $X^{q^2+q+1}\in\mathbb{F}_q$( since $X^{(q^2+q+1)(q-1)}=X^{q^3-1}=1$).  We are done.
\end{proof}

It has been shown by Jiang, Li, and Qu that the first case listed in Table \ref{table1} is indeed a PP over $\mathbb{F}_{q^3}$. Specifically, they proved in \cite[Theorem 4.1]{LikqQulj2026} relying on the resultant of polynomials. It is worth noting that the same result can be derived more directly by applying Lemma \ref{lem3-1}. We slightly generalize this result and provide an alternative proof in the following.

\begin{theorem}\label{26-1}
Let $q=2^m$ and $\gamma\in\mathbb{F}_q$. Then $f(X)=X+\gamma\Tr_q^{q^3}(X^{\frac{q+1}{2}}+X^{\frac{q^2+q+2}{2}})$ is a PP of $\mathbb{F}_{q^3}$ if and only if $\gamma=0$ or $\gamma=1$.
\end{theorem}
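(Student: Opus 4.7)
The plan is to apply Lemma~\ref{lem3-1} with $a=0$ (noting $a^2+a+1=1\neq 0$) and the basis from Corollary~\ref{basis}, reducing the question of whether $f$ is a PP of $\mathbb{F}_{q^3}$ to whether
$g(x,y,z)=x+\gamma\Tr_q^{q^3}(h(x+y\alpha+z\alpha^q))$
is a PP of $\mathbb{F}_q$ in $x$ for each $(y,z)\in\mathbb{F}_q^2$. Because $q=2^m$, the identity $\Tr_q^{q^3}(W^2)=\Tr_q^{q^3}(W)^2$ implies that $\Tr_q^{q^3}(X^{(q+1)/2})$ and $\Tr_q^{q^3}(X^{(q^2+q+2)/2})$ are the unique square roots in $\mathbb{F}_q$ of the formulas from Lemma~\ref{lem3-4}(i)--(ii). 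Extracting roots termwise (using $\sqrt{u+v}=\sqrt u+\sqrt v$ in characteristic~$2$) gives
$$\Tr_q^{q^3}(h(X)) = x^2 + (1+A)\,x + B\,x^{1/2} + A,$$
where $A=\sqrt{\mathcal{T}(y^2+z^2+yz)}$, $B=\sqrt{\Tr_q^{q^3}((y\alpha+z\alpha^q)^3)}$ and $\mathcal{T}=\Tr_q^{q^3}(\alpha^{q+1})\in\mathbb{F}_q$. After the bijective substitution $u=x^{1/2}$, the permutation condition on $g$ becomes that $\gamma u^4+(1+\gamma+\gamma A)u^2+\gamma B u+\gamma A$ permutes $\mathbb{F}_q$ in $u$.

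The case $\gamma=0$ is immediate. For $\gamma\in\mathbb{F}_q\setminus\{0,1\}$, specializing to $y=z=0$ forces $A=B=0$ and reduces the polynomial to the additive map $\gamma u^4+(1+\gamma)u^2$, whose kernel contains the nonzero element $\sqrt{(1+\gamma)/\gamma}$; hence $f$ is not a PP. For $\gamma=1$ the polynomial is $L(u)+A$ with $L(u)=u^4+Au^2+Bu=u(u^3+Au+B)$ being $\mathbb{F}_2$-additive, so $f$ is a PP if and only if, for every $(y,z)\in\mathbb{F}_q^2$, the cubic $v^3+Av+B$ has no root in $\mathbb{F}_q^*$.

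The main obstacle is verifying this cubic condition uniformly in $(y,z)$; the key idea is to identify $t^3+A^2t+B^2$ with the characteristic polynomial of a specific element. Setting $Z=y\alpha+z\alpha^q\in\mathbb{F}_{q^3}$, one has $e_1(Z)=\Tr_q^{q^3}(Z)=0$ since $\Tr_q^{q^3}(\alpha)=\Tr_q^{q^3}(\alpha^q)=0$; the identity $\Tr_q^{q^3}(Z^{q+1})=ZZ^q+Z^qZ^{q^2}+Z^{q^2}Z=e_2(Z)$ combined with Lemma~\ref{lem3-4}(i) at $x=0$ yields $e_2(Z)=A^2$; and Newton's identity $p_3=e_1p_2+e_2p_1+e_3$ in characteristic~$2$ with $e_1=p_1=0$ yields $e_3(Z)=\Tr_q^{q^3}(Z^3)=B^2$. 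Therefore the characteristic polynomial of $Z$ over $\mathbb{F}_q$ is exactly $t^3+A^2t+B^2$, with roots $Z,Z^q,Z^{q^2}$ in $\mathbb{F}_{q^3}$. Squaring $v^3+Av+B=0$ gives $(v^2)^3+A^2(v^2)+B^2=0$, so any $v\in\mathbb{F}_q^*$ satisfying it would produce $v^2\in\{Z,Z^q,Z^{q^2}\}\cap\mathbb{F}_q^*$. However, $Z\in\mathbb{F}_q$ forces $y=z=0$ (as $\{1,\alpha,\alpha^q\}$ is a basis and $Z$ has vanishing $1$-component), in which case $Z=0$; otherwise $Z,Z^q,Z^{q^2}$ form a Galois orbit inside $\mathbb{F}_{q^3}\setminus\mathbb{F}_q$. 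In either case $\{Z,Z^q,Z^{q^2}\}\cap\mathbb{F}_q^*=\emptyset$, contradicting the existence of such $v$, so $f$ is a PP.
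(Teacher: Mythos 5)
Your proposal is correct and follows essentially the same route as the paper: reduce via Lemma~\ref{lem3-1} to a univariate condition in $x$ for each $(y,z)$, kill $\gamma\in\mathbb{F}_q^*\setminus\{1\}$ by specializing $y=z=0$, and for $\gamma=1$ show the additive polynomial has trivial kernel by recognizing the cubic as the characteristic polynomial of $Z=y\alpha+z\alpha^q$, whose roots $Z,Z^q,Z^{q^2}$ avoid $\mathbb{F}_q^*$. The only differences are cosmetic (you take square roots and substitute $u=x^{1/2}$ where the paper passes to $f(X^2)$, and you obtain the factorization via Newton's identities rather than the explicit expansion of $X^{q^2+q+1}$), and your treatment of the $y=z=0$ case in the final step is in fact slightly more careful than the paper's.
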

\begin{proof}
It is trivial that if $\gamma=0$, then $f(X)=X$ is a PP of $\mathbb{F}_{q^3}$. Now assume $\gamma\neq0$. Observe that $f(X)$ has the same permutation behavior as \begin{equation*}F(X)=X^2+\gamma\Tr_q^{q^3}(X^{q+1}+X^{q^2+q+2}),\end{equation*}
so it suffices to study the permutation behavior of $F(X)$. Choose an element $\alpha\in\mathbb{F}_{q^3}^*$ such that $\Tr_q^{q^3}(\alpha)=0$ and $\{1,\alpha,\alpha^q\}$ forms a basis of $\mathbb{F}_{q^3}$ over $\mathbb{F}_q$.
Let $X=x+y\alpha+z\alpha^q$ with $x,y,z\in\mathbb{F}_q$. Then plugging the results from Lemma \ref{lem3-4}(i) and (ii) into $F(X)$, we obtain
\begin{align*}
F(X)&=x^2+y^2\alpha^2+z^2\alpha^{2q}+
\gamma\left(x^2+x^4+x^2(y^2+z^2+yz)\Tr_q^{q^3}(\alpha^{q+1})
 +x\Tr_q^{q^3}\left((y\alpha+z\alpha^q)^3\right)
 +\ell(y,z)\right),
\end{align*}
where $\ell(y,z)=(y^2+z^2+yz)\Tr_q^{q+1}(\alpha^{q+1})\in\mathbb{F}_q[y,z]$ is independent of $x$. By Lemmas \ref{constant} and \ref{lem3-1}, $F(X)$ is a PP of $\mathbb{F}_{q^3}$ if and only if for each $y,z\in\mathbb{F}_{q}$,
\begin{equation*}
g(x,y,z)=x^2+
\gamma\left(x^2+x^4+ x^2(y^2+z^2+yz)\Tr_q^{q^3}(\alpha^{q+1})
+x\Tr_q^{q^3}\left((y\alpha+z\alpha^q)^3\right)\right)
\end{equation*}
is a PP of $\mathbb{F}_q$. Let us now examine a special case by setting $y=z=0$. That is,
\begin{equation*}
g(x,0,0)=\gamma x^4+(\gamma+1)x^2. \end{equation*}
Clearly $g(x,0,0)$
permutes $\mathbb{F}_{q}$ if and only if $\gamma=1$, which implies that  for $\gamma\in\mathbb{F}_{q}^*\backslash\{1\}$, $F(X)$ cannot be a PP of $\mathbb{F}_{q^3}$.

It remains to prove that $F(X)$ is indeed a PP of $\mathbb{F}_{q^3}$ for $\gamma=1$. As indicated by the above analysis, it is suffices to show that for any $y,z\in\mathbb{F}_q$, the linearized polynomial
\begin{equation*}g(x,y,z)=x^4+ x^2(y^2+z^2+yz)\Tr_q^{q^3}(\alpha^{q+1})
+x\Tr_q^{q^3}\left((y\alpha+z\alpha^q)^3\right)\end{equation*}
is a PP of $\mathbb{F}_q$. That is to say, for any $y,z\in\mathbb{F}_q$,
\begin{equation*}x^3+x(y^2+z^2+yz)\Tr_q^{q^3}(\alpha^{q+1})+\Tr_q^{q^3}\left((y\alpha+z\alpha^q)^3\right)=0\end{equation*}
has no solution in $\mathbb{F}_q$.
Observing the expression (\ref{X^q^2+q+1}) in the proof of Lemma \ref{lem3-4}, we see that this cubic equation can be precisely factored as 
\begin{equation*}(x+y\alpha+z\alpha^q)(x+y\alpha^q+z\alpha^{q^2})(x+y\alpha^{q^2}+z\alpha)=0.\end{equation*}
Hence, the three roots of the above equation are $y\alpha+z\alpha^q$, $y\alpha^q+z\alpha^{q^2}$ and $y\alpha^{q^2}+z\alpha$. Clearly, none of these roots lies in $\mathbb{F}_q$, thus completing the proof.
\end{proof}

The following corollary readily follows from the proof of Theorem \ref{26-1}.
\begin{corollary}
Let $q=2^m$. Then for any positive integer $j$, the polynomial \begin{equation*}f(X)=X+\Tr_q^{q^3}(X^{\frac{q+1}{2}}+X^{\frac{(q^2+q+2)2^j}{2}})\end{equation*} is a PP of $\mathbb{F}_{q^3}$.
\end{corollary}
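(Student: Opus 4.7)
The plan is to reduce the claim to the $\gamma=1$ case of Theorem \ref{26-1} by squaring $f(X)$ and showing that the extra $2^j$-th power can be absorbed into a bijection of $\mathbb{F}_q$. Since $X\mapsto X^2$ is a bijection on $\mathbb{F}_{q^3}$ in characteristic $2$, it suffices to prove that $f(X)^2$ permutes $\mathbb{F}_{q^3}$. My first step is to simplify $f(X)^2$: using the characteristic-$2$ identity $(\Tr_q^{q^3}(A))^2=\Tr_q^{q^3}(A^2)$ together with the identity $\Tr_q^{q^3}(B^{2^j})=(\Tr_q^{q^3}(B))^{2^j}$ (which holds because Frobenius commutes with the $q$-power map in characteristic $2$), I would rewrite
\begin{equation*}
f(X)^2 = X^2 + \Tr_q^{q^3}(X^{q+1}) + \left(\Tr_q^{q^3}(X^{q^2+q+2})\right)^{2^j}.
\end{equation*}

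The second step is to choose $\alpha$ as in Lemma \ref{lem3-1}, write $X=x+y\alpha+z\alpha^q$ with $x,y,z\in\mathbb{F}_q$, and apply parts (i) and (ii) of Lemma \ref{lem3-4} to expand the two trace terms. Note that $\{1,\alpha^2,\alpha^{2q}\}$ is also an $\mathbb{F}_q$-basis of $\mathbb{F}_{q^3}$, since any $\mathbb{F}_q$-linear relation among these three elements would, after extracting square roots in $\mathbb{F}_q$ (possible in characteristic $2$), violate the independence of $\{1,\alpha,\alpha^q\}$. In this basis, $f(X)^2$ has coefficients $y^2$ on $\alpha^2$ and $z^2$ on $\alpha^{2q}$, both depending only on $y,z$, while the coefficient of $1$ takes the form
\begin{equation*}
C(y,z)+\bigl[x^4+x^2 A(y,z)+x B(y,z)\bigr]^{2^j},
\end{equation*}
with $A(y,z)=(y^2+z^2+yz)\Tr_q^{q^3}(\alpha^{q+1})$, $B(y,z)=\Tr_q^{q^3}((y\alpha+z\alpha^q)^3)$, and $C(y,z)\in\mathbb{F}_q$ independent of $x$.

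The third step is to invoke the multivariate criterion of Lemma \ref{pang-yuan-wu-guan} (as in the proof of Lemma \ref{lem3-1}): since $y\mapsto y^2$ and $z\mapsto z^2$ are bijections on $\mathbb{F}_q$, $f(X)^2$ permutes $\mathbb{F}_{q^3}$ if and only if, for every fixed $y,z\in\mathbb{F}_q$, the coefficient of $1$ permutes $\mathbb{F}_q$ as a function of $x$. Dropping the constant $C(y,z)$ and stripping off the outer bijection $(\cdot)^{2^j}$ on $\mathbb{F}_q$, this reduces to showing that the linearized polynomial $x^4+x^2 A(y,z)+x B(y,z)$ permutes $\mathbb{F}_q$. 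But this is precisely what was established in the $\gamma=1$ case of Theorem \ref{26-1}: the associated cubic factors as $(x+y\alpha+z\alpha^q)(x+y\alpha^q+z\alpha^{q^2})(x+y\alpha^{q^2}+z\alpha)$, and none of its roots lies in $\mathbb{F}_q$.

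I do not anticipate a major obstacle; the only substantive point is the first step --- the identity $\Tr_q^{q^3}(B^{2^j})=(\Tr_q^{q^3}(B))^{2^j}$, which moves the $2^j$-th power from inside the trace (where it alters the exponent of $X$) to outside (where it is absorbed by a bijection of $\mathbb{F}_q$). Once this transposition is done, the corollary follows immediately from the reduction already carried out in the proof of Theorem \ref{26-1}.
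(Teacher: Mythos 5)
Your proof is correct and is precisely the argument the paper intends when it states that the corollary ``readily follows from the proof of Theorem \ref{26-1}'': the identity $\Tr_q^{q^3}(B^{2^j})=(\Tr_q^{q^3}(B))^{2^j}$ moves the extra power outside the trace, where it becomes an outer bijection $t\mapsto t^{2^j}$ of $\mathbb{F}_q$ composed with the same linearized polynomial $x^4+A(y,z)x^2+B(y,z)x$ already shown to permute $\mathbb{F}_q$ in the $\gamma=1$ case. The paper leaves these details implicit; you have supplied them correctly, including the observation that $\{1,\alpha^2,\alpha^{2q}\}$ remains a basis.
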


\begin{theorem}
Let $q=2^m$  and $\gamma\in\mathbb{F}_q$. Then $f(X)=X+\gamma\Tr_q^{q^3}(X^{\frac{q+1}{2}}+X^{\frac{q^2-q+1}{2}})$ is a PP of $\mathbb{F}_{q^3}$ if and only if $\gamma=0$ or $\gamma=1$.
\end{theorem}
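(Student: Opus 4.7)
The plan is to mirror the proof of Theorem \ref{26-1}. Since $X \mapsto X^2$ is a bijection on $\mathbb{F}_{q^3}$, $f$ is a PP if and only if $F := f^2$ is, and using $[\Tr_q^{q^3}(Y)]^2 = \Tr_q^{q^3}(Y^2)$ together with the additivity of squaring in characteristic $2$, I would compute
$$F(X) = X^2 + \gamma^2\Tr_q^{q^3}(X^{q+1} + X^{q^2-q+1}).$$
After renaming $\gamma^2$ as $\gamma$ (squaring is a bijection of $\mathbb{F}_q$ preserving $\{0,1\}$), I would choose $\alpha \in \mathbb{F}_{q^3}^*$ as in Corollary \ref{basis}, write $X = x + y\alpha + z\alpha^q$, and apply Proposition \ref{pang-yuan-wu-guan} with the basis $\{1, \alpha^2, \alpha^{2q}\}$ (a basis since $p$-th powers of a basis form a basis). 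This reduces the problem to showing that for each $(y,z) \in \mathbb{F}_q^2$, the map $x \mapsto F_1(x,y,z) := x^2 + \gamma\Tr_q^{q^3}(X^{q+1} + X^{q^2-q+1})$ is a bijection of $\mathbb{F}_q$.

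The key computational input I need is the identity
$$\Tr_q^{q^3}(X^{q^2-q+1}) = \frac{\sigma_2(X)^2}{N(X)} \quad \text{for } X \neq 0,$$
where $\sigma_2(X) := \Tr_q^{q^3}(X^{q+1})$ and $N(X) := X^{q^2+q+1}$. I would verify this by multiplying through by $N$ and recognizing that the resulting sum equals $\sigma_2(X^2, X^{2q}, X^{2q^2}) = \sigma_2(X)^2$ in characteristic $2$. Combined with Lemma \ref{lem3-4}(i) and the expansion $N(X) = x^3 + xB + C$ implicit in its proof, where $B := (y^2+z^2+yz)\Tr_q^{q^3}(\alpha^{q+1})$ and $C := \Tr_q^{q^3}((y\alpha + z\alpha^q)^3)$, this yields an explicit formula for $F_1$. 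Specializing to $y = z = 0$ gives $F_1(x, 0, 0) = (1+\gamma)x^2 + \gamma x$, an $\mathbb{F}_2$-linear map of $\mathbb{F}_q$ whose kernel is trivial if and only if $\gamma \in \{0, 1\}$, which yields the necessity. The sufficiency for $\gamma = 0$ is trivial since $F = X^2$.

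For $\gamma = 1$, the $x^2$ terms cancel, leaving $F_1 = B + (x^2+B)^2/(x^3 + xB + C)$ for $(y,z) \neq (0,0)$ (so that $X$, hence $N$, never vanishes). Suppose $F_1(x_1) = F_1(x_2)$ with $x_1 \neq x_2$, and set $s = x_1 + x_2$, $p = x_1 x_2$. Cross-multiplying and simplifying in characteristic $2$, I expect the collision equation to factor as
$$s\bigl[(p+B)^3 + s^2\bigl(B(p+B) + Cs\bigr)\bigr] = 0.$$
Since $s \neq 0$, dividing by $s^3$ and setting $t := (p+B)/s \in \mathbb{F}_q$ yields $t^3 + Bt + C = 0$. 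However, $T^3 + BT + C$ is precisely the minimal polynomial of $u := y\alpha + z\alpha^q$ over $\mathbb{F}_q$ (since $\Tr_q^{q^3}(u) = 0$, $\sigma_2(u) = B$, $N(u) = C$), and for $(y,z) \neq (0,0)$ we have $u \notin \mathbb{F}_q$, so this cubic is irreducible over $\mathbb{F}_q$ --- a contradiction. The boundary case $(y,z) = (0,0)$ is immediate, since $\Tr_q^{q^3}(x^{q^2-q+1}) = 3x = x$ for $x \in \mathbb{F}_q$, giving $F_1(x, 0, 0) = x$. The main obstacle is identifying the closed form $\sigma_2^2/N$ for the new trace, because the exponent $q^2-q+1$ does not appear in Lemma \ref{lem3-4} and $X^{q^2-q+1}$ is not symmetric in the conjugates $X, X^q, X^{q^2}$; once that identity is in hand, recognizing the collision equation as the minimal polynomial of $u$ is the decisive step.
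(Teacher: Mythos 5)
Your proposal is correct, and on the setup and the necessity direction it coincides with the paper's argument: pass to the squared polynomial, use the identity $\Tr_q^{q^3}(X^{q^2-q+1})=\bigl(\Tr_q^{q^3}(X^{q+1})\bigr)^2/X^{q^2+q+1}$ (the paper gets it from $X^{q^2-q+1}=X^{2q^2+2}/X^{q^2+q+1}$ with $X^{q^2+q+1}\in\mathbb{F}_q$, which is the same computation as your ``multiply through by $N$'' check), reduce to bijectivity of $x\mapsto F_1(x,y,z)$ on each fiber, and specialize to $y=z=0$ to force $\gamma\in\{0,1\}$; that you postcompose with squaring ($f^2$, turning $\gamma$ into $\gamma^2$) while the paper precomposes ($f(X^2)$) is immaterial. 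Where you genuinely depart from the paper is the sufficiency for $\gamma=1$: the paper gives no argument and simply cites \cite[Theorem 4.2]{LikqQulj2026}, whereas you prove it directly. Your computation checks out: with $P(T)=T^3+BT+C$, the collision equation $(x_1^2+B)^2P(x_2)+(x_2^2+B)^2P(x_1)=0$ does expand to $s\bigl[(p+B)^3+s^2\bigl(B(p+B)+Cs\bigr)\bigr]=0$ for $s=x_1+x_2$, $p=x_1x_2$, and dividing by $s^3$ yields $P(t)=0$ with $t=(p+B)/s\in\mathbb{F}_q$, contradicting the irreducibility of $P$, which is the characteristic polynomial of $u=y\alpha+z\alpha^q\notin\mathbb{F}_q$ (the identification $C=N(u)$, i.e.\ $\Tr_q^{q^3}(u^3)=u^{q^2+q+1}$ when $\Tr_q^{q^3}(u)=0$ in characteristic $2$, is exactly what the paper's expansion of $X^{q^2+q+1}$ gives at $x=0$). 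This is the same irreducible cubic that settles the $\gamma=1$ case of Theorem \ref{26-1}, so your version makes the two proofs uniform and, unlike the paper's, is self-contained; your separate treatment of $(y,z)=(0,0)$, where the denominator $N$ would vanish at $x=0$, also supplies a detail the paper leaves implicit.
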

\begin{proof}
Clearly $f(X)=X$ is a PP of $\mathbb{F}_{q^3}$. Assume $\gamma\neq0$ in the following. It suffices to study the permutation behavior of $F(X)=f(X^2)$ over $\mathbb{F}_{q^3}$. By Corollary \ref{basis}, there exists $\alpha\in\mathbb{F}_{q^3}^*$ satisfying  $\Tr_q^{q^3}(\alpha)=0$ and $\{1,\alpha,\alpha^q\}$ forms a basis of $\mathbb{F}_{q^3}$ over $\mathbb{F}_q$.
Let $X=x+y\alpha+z\alpha^q$ with $x,y,z\in\mathbb{F}_q$. Note that
$X^{q^2-q+1}=X^{2q^2+2}/X^{q^2+q+1}$ and $X^{q^2+q+1}\in\mathbb{F}_q$. We have
\begin{equation*}\Tr_q^{q^3}(X^{q^2-q+1})=\Tr_q^{q^3}\left(\frac{X^{2q^2+2}}{X^{q^2+q+1}}\right)=\frac{\left(\Tr_q^{q^3}(X^{q+1})\right)^2}{X^{q^2+q+1}}.\end{equation*}
By Lemma \ref{lem3-4}, $F(X)$ can be rewritten as
\begin{align*}
F(X)&=X^2+\gamma\Tr_q^{q^3}(X^{q+1}+X^{q^2-q+1})\\
&=x^2+y^2\alpha^2+z^2\alpha^{2q}+\gamma\left(x^2+\ell(y,z)+\frac{x^4+\ell^2(y,z)}{X^{q^2+q+1}}\right)\\
   &=x^2+y^2\alpha^2+z^2\alpha^{2q}+\gamma\left(x^2+\ell(y,z)+\frac{x^4+\ell^2(y,z)}{x^3+x(y^2+z^2+yz)\Tr_q^{q^3}(\alpha^{q+1})+\Tr_q^{q^3}\left((y\alpha+z\alpha^q)^3\right)}\right),\\
\end{align*}
where $\ell(y,z)=(y^2+z^2+yz)\Tr_q^{q+1}(\alpha^{q+1})\in\mathbb{F}_q[y,z]$ is independent of $x$.
By Lemmas \ref{constant} and \ref{lem3-1}, $F(X)$ is a PP of $\mathbb{F}_{q^3}$ if and only if for every $y,z\in\mathbb{F}_q$,
\begin{equation*}g(x,y,z)=x^2+\gamma\left(x^2+\frac{x^4+\ell^2(y,z)}{x^3+x(y^2+z^2+yz)\Tr_q^{q^3}(\alpha^{q+1})+\Tr_q^{q^3}\left((y\alpha+z\alpha^q)^3\right)}\right)\end{equation*}
is a PP of $\mathbb{F}_{q}$. In the special case $y=z=0$, we obtain $g(x,0,0)=x^2+\gamma(x^2+x)=(\gamma +1)x^2+\gamma x$. It is easy to see that for $\gamma\neq0$, this quadratic polynomial is a PP of $\mathbb{F}_q$ only when $\gamma=1$. Therefore, $F(X)$ cannot be a PP of $\mathbb{F}_{q^3}$ for $\gamma\in\mathbb{F}_{q}^*\backslash\{1\}$.
Finally, we conclude this result by showing that $F(X)$ is a PP of $\mathbb{F}_{q^3}$ for $\gamma = 1$. This  result follows directly from \cite[Theorem 4.2]{LikqQulj2026}.
\end{proof}

\begin{theorem}
Let $q=2^m$, $\gamma\in\mathbb{F}_q$ and $j\in\{0,1\}$. Then $f(X)=X+\gamma\Tr_q^{q^3}\left(X^{\frac{q+1}{2}}+X^s\right)$, where $s=2q+1$ or $s=\frac{q^3+2q}{2}$ is a PP of $\mathbb{F}_{q^3}$ if and only if $\gamma=0$; or $\gamma=1$ and $m$ is odd.
\end{theorem}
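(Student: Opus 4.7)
My plan is to parallel the two preceding theorems in this subsection: pass to the auxiliary polynomial $F(X):=f(X^2)$, which has the same permutation behavior as $f(X)$ since $X\mapsto X^2$ is a bijection of $\mathbb{F}_{q^3}$; expand $F$ in the basis $\{1,\alpha^2,\alpha^{2q}\}$ (which is a basis because $\{1,\alpha,\alpha^q\}$ is, by Corollary~\ref{basis}); apply Lemma~\ref{lem3-4} to simplify the relevant trace expressions; and invoke Lemma~\ref{lem3-1} to reduce the PP question to whether an explicit univariate polynomial in $x$ permutes $\mathbb{F}_q$ for every $y,z\in\mathbb{F}_q$. The case $\gamma=0$ is immediate, so assume $\gamma\neq 0$ hereafter.

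The crucial preliminary step is to show that both choices of $s$ produce essentially the same trace after squaring the input. If $s=2q+1$, then $2s=2(2q+1)$ gives $X^{2s}=(X^{2q+1})^2$, and since the trace commutes with the Frobenius in characteristic two,
\[
\Tr_q^{q^3}(X^{4q+2}) = \bigl(\Tr_q^{q^3}(X^{2q+1})\bigr)^2.
\]
If $s=(q^3+2q)/2$, then $2s=q^3+2q$, and because $X^{q^3}=X$ on all of $\mathbb{F}_{q^3}$ the polynomial $X^{q^3+2q}$ induces the same function as $X^{2q+1}$. Writing $X=x+y\alpha+z\alpha^q$ with $x,y,z\in\mathbb{F}_q$, parts (i) and (iii) of Lemma~\ref{lem3-4} give
\[
\Tr_q^{q^3}(X^{q+1})=x^2+c_1(y,z),\qquad \Tr_q^{q^3}(X^{2q+1})=x^3+c_2(y,z),
\]
for explicit $c_1,c_2\in\mathbb{F}_q[y,z]$. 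Combined with $X^2=x^2+y^2\alpha^2+z^2\alpha^{2q}$, the $1$-component of $F(X)$ becomes
\[
\gamma x^3+(\gamma+1)x^2+c(y,z)\quad\text{when } s=(q^3+2q)/2,
\]
and
\[
\gamma x^6+(\gamma+1)x^2+c'(y,z)\quad\text{when } s=2q+1,
\]
while the $\alpha^2$- and $\alpha^{2q}$-components are $y^2$ and $z^2$, which are independent of $x$.

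I would finish by the usual even-characteristic analysis: Lemma~\ref{lem3-1} together with Lemma~\ref{constant} (to absorb the $(y,z)$-dependent constants) shows that $F(X)$ permutes $\mathbb{F}_{q^3}$ iff $\gamma x^3+(\gamma+1)x^2$, respectively $\gamma u^3+(\gamma+1)u$ obtained from the sextic via the bijection $u=x^2$, permutes $\mathbb{F}_q$. Both factor, as $x^2\bigl(\gamma x+(\gamma+1)\bigr)$ and $u\bigl(\gamma u^2+(\gamma+1)\bigr)$. If $\gamma\neq 1$, the second factor supplies a further zero, namely $x=(\gamma+1)/\gamma$ or $u=\sqrt{(\gamma+1)/\gamma}$ (well-defined and unique in characteristic two), which together with $x=0$ or $u=0$ destroys injectivity. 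Hence $\gamma=1$ is forced, in which case the polynomials collapse to $x^3$ and $u^3$, and each permutes $\mathbb{F}_q$ precisely when $\gcd(3,q-1)=1$, i.e.\ when $m$ is odd. The only mildly subtle point to flag in the write-up is the reduction $X^{q^3+2q}\equiv X^{2q+1}$ in the second case, which must be read as an identity of functions on $\mathbb{F}_{q^3}$ rather than of polynomials; this is legitimate because the PP property depends only on the induced map, and once it is in hand, the two cases merge into a single analysis.
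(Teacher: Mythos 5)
Your proposal is correct and follows essentially the same route as the paper: pass to $F(X)=f(X^2)$, note that both choices of $s$ reduce (as functions, via $X^{q^3}=X$) to the trace of $X^{2q+1}$ or its square, expand in the basis $\{1,\alpha^2,\alpha^{2q}\}$ using Lemma \ref{lem3-4}, and reduce to the univariate polynomial $\gamma x^{3\cdot 2^j}+(\gamma+1)x^2$. The only cosmetic difference is that you settle the cubic $\gamma x^3+(\gamma+1)x$ by direct factorization in characteristic two, whereas the paper cites the normalized-form table in Lidl--Niederreiter; both give $\gamma=1$ and $\gcd(3,q-1)=1$, i.e.\ $m$ odd.
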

\begin{proof} It suffices to study the permutation behavior of $F(X)=f(X^2)$. We only prove for $\gamma\neq0$. Let $\alpha\in\mathbb{F}_{q^3}^*$ with $\Tr_q^{q^3}(\alpha)=0$ such that $\{1,\alpha,\alpha^q\}$ is a basis of $\mathbb{F}_{q^3}$ over $\mathbb{F}_q$. Let $X=x+y\alpha+z\alpha^q$ with $x,y,z\in\mathbb{F}_q$. According to Lemma \ref{lem3-4} (i) and (iii), we can reformulate $F(X)$ as follows:
\begin{align*}
F(X)&=X^2+\gamma\Tr_q^{q^3}(X^{q+1}+X^{(2q+1)2^j})\\
&=x^2+y^2\alpha^2+z^2\alpha^{2q}+\gamma\left(x^2+x^{3\cdot2^j}+\ell(y,z)\right),
\end{align*}
where the parameter $j$ depends on $s$: $j=1$ corresponds to $s=2q+1$, $j=0$ corresponds to $s=\frac{q^3+2q}{2}$, and $\ell(y,z)=(y^2+z^2+yz)\Tr_q^{q^3}(\alpha^{q+1})+ \Tr_q^{q^3}\left((z\alpha+y\alpha^q+z\alpha^q)^{2^{j+1}}(y\alpha+z\alpha^q)^{2^j}\right)\in\mathbb{F}_{q}[y,z]$ is a constant in $\mathbb{F}_q$ with respect to $x$. It follows that $F(X)$ is a PP of $\mathbb{F}_{q^3}$ if and only if for every $y,z\in\mathbb{F}_q$,
\begin{equation*}g(x,y,z)=x^2+\gamma\left(x^2+x^{3\cdot2^j}+\ell(y,z)\right),\end{equation*}
i.e., \qquad \qquad \qquad \qquad \qquad \quad
$\left(g(x,y,z)-\gamma\ell(y,z)\right)\gamma^{-1}=x^{3\cdot2^j}+(1+\gamma^{-1})x^2$

\noindent is a PP of $\mathbb{F}_q$. For the case $j=1$, we observe that $x^6+(1+\gamma^{-1})x^2=\left(x^3+(1+\gamma^{-1})x\right)\circ x^2$. The permutation property of the left-hand side is therefore equivalent to that of $x^3+(1+\gamma^{-1})x$. According to Table 7.1 in \cite{Lidl}, $x^3+(1+\gamma^{-1})x$ permutes $\mathbb{F}_{q}$ if and only if $q\not\equiv1\pmod3$ and $\gamma=1$. For the case $j=0$, the polynomial reduces to $x^3+(1+\gamma^{-1})x^2$, whose permutation property is equivalent to its normalized form $x^3+(1+\gamma^{-1})^2x$. Applying  Table 7.1 in \cite{Lidl} once more yields $x^3+(1+\gamma^{-1})^2x$ permutes $\mathbb{F}_q$ if and only if $q\not\equiv1\pmod3$ and $\gamma=1$. Given $q=2^m$, the condition $q\not\equiv1\pmod3$ is equivalent to $m$ being odd. Consequently, both cases require $m$ be odd and $\gamma=1$. We thus get the desired result.
\end{proof}

\begin{theorem}
Let $q=2^m$  and $\gamma\in\mathbb{F}_q$. Then $f(X)=X+\gamma\Tr_q^{q^3}(X^{2q+1}+X^{4q+1})$ is a PP of $\mathbb{F}_{q^3}$ if and only if $\gamma=0$; or $\gamma=1$ and $m$ is odd.
\end{theorem}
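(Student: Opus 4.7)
The plan is to reduce the problem to a univariate permutation question over $\mathbb{F}_q$ using Lemmas~\ref{lem3-4} and \ref{lem3-1}. Choose $\alpha\in\mathbb{F}_{q^3}^*$ via Corollary~\ref{basis} and write $X = x + y\alpha + z\alpha^q$; Lemma~\ref{lem3-4}(iii),(iv) yields $\Tr_q^{q^3}(X^{2q+1} + X^{4q+1}) = x^3 + x^5 + \ell(y,z)$ for some $\ell(y,z)\in\mathbb{F}_q$ independent of $x$. Then Lemma~\ref{lem3-1} (with $a=0$) shows that $f$ is a PP of $\mathbb{F}_{q^3}$ if and only if $P(x) := x + \gamma x^3 + \gamma x^5$ is a PP of $\mathbb{F}_q$. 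The case $\gamma = 0$ is trivial. For $\gamma\neq 0$, since $\gcd(4, 2^m-1)=1$, there is a unique $c \in \mathbb{F}_q^*$ with $c^4 = \gamma^{-1}$; the substitution $x = cy$ followed by rescaling transforms $P$ into $\tilde P(y) := y^5 + ay^3 + y$ with $a = \gamma^{1/2}$, so the question reduces to whether $\tilde P$ permutes $\mathbb{F}_q$.

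For sufficiency when $\gamma=1$ and $m$ is odd, $a = 1$ and $\tilde P(y) = D_5(y,1)$ in characteristic~$2$; by the classical criterion for Dickson polynomials \cite{German1968}, $D_5(y,1)$ is a PP of $\mathbb{F}_q$ iff $\gcd(5, q^2-1)=1$, which holds since $4^m \equiv -1 \pmod 5$ for $m$ odd. For necessity when $m$ is even, the primitive cube roots $\omega, \omega^2 \in \mathbb{F}_4 \subseteq \mathbb{F}_q$ satisfy $\omega + \omega^2 = 1$, and a direct computation yields $\tilde P(\omega) = \omega^2 + a + \omega = 1 + a = \tilde P(\omega^2)$, so $\tilde P$ is not a PP for any $a \in \mathbb{F}_q^*$.

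The main obstacle will be the necessity when $m$ is odd and $a \neq 1$. For this, I analyze when $\tilde P(y_1) = \tilde P(y_2)$ with $y_1 \neq y_2$: writing $s = y_1 + y_2 \neq 0$, $p = y_1 y_2$, $w = s^2$, and $v = p/s^2$, a routine expansion in characteristic~$2$ gives the criterion $R(w, v) := w^2(v^2+v+1) + aw(v+1) + 1 = 0$ subject to $\Tr_2^q(v) = 0$ (which ensures $y_1, y_2 \in \mathbb{F}_q$). If $\Tr_2^q(1/a) = 0$, then $v = 0$ makes $w^2 + aw + 1$ admit a nonzero root in $\mathbb{F}_q$, so $\tilde P$ is not PP. Otherwise $\Tr_2^q(1/a) = 1$, and the substitution $u = v + 1$ (so $\Tr_2^q(u) = 1$ using $m$ odd) together with the standard trace criterion for quadratic solvability reduces the question to producing $u \in \mathbb{F}_q^*$ with $\Tr_2^q(u) = 1$ and $\Tr_2^q(k/u) = 1$, where $k := (a+1)/a^2$. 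Since $a \neq 1$, $k \neq 0$ and $u \mapsto k/u$ is a bijection of $\mathbb{F}_q^*$; the cardinality inequality $|S_1| = 2^{m-1} > 2^{m-1} - 1 = |S_0|$, with $S_i = \{u \in \mathbb{F}_q^* : \Tr_2^q(u) = i\}$, forces $\{k/u : u \in S_1\} \not\subseteq S_0$, yielding the required $u^* \in S_1$ with $k/u^* \in S_1$ and completing the proof.
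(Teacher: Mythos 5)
Your proposal is correct, and the reduction to a univariate question over $\mathbb{F}_q$ is exactly the paper's: both use Lemma~\ref{lem3-4}(iii)--(iv) and Lemma~\ref{lem3-1} to reduce to whether $x+\gamma x^3+\gamma x^5$ (equivalently $x^5+x^3+\gamma^{-1}x$, or your rescaled $y^5+\gamma^{1/2}y^3+y$) permutes $\mathbb{F}_q$. Where you genuinely diverge is the endgame: the paper disposes of the univariate question in one line by citing Table~7.1 of \cite{Lidl}, which classifies all normalized permutation polynomials of degree at most $5$ and whose only degree-$5$ entry in even characteristic forces $\gamma^{-1}=5^{-1}a^2=1$ and $q\equiv\pm2\pmod 5$ (i.e.\ $m$ odd). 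You instead give a self-contained proof: sufficiency via the Dickson/N\"obauer criterion $\gcd(5,q^2-1)=1$ applied to $D_5(y,1)=y^5+y^3+y$, non-permutation for $m$ even via the explicit collision $\tilde P(\omega)=\tilde P(\omega^2)=1+a$ at the cube roots of unity, and non-permutation for $m$ odd and $a\neq1$ via the symmetric-function reduction $w^2(v^2+v+1)+aw(v+1)+1=0$ with $\Tr_2^q(v)=0$, split on $\Tr_2^q(1/a)$ and finished by the counting argument $|S_1|>|S_0|$ on $u\mapsto k/u$ with $k=(a+1)/a^2\neq0$. I checked the details (the identity $y_1^5+y_2^5=s(s^4+ps^2+p^2)$, the trace condition $\Tr_2^q((u^2+u+1)/(a^2u^2))=1+\Tr_2^q(k/u)$ using $\Tr_2^q(1/a^2)=\Tr_2^q(1/a)=1$, and the exclusion of $w=0$ since the constant term of the quadratic in $w$ is $1$) and they all hold; note also that your case analysis in fact reproves sufficiency for $a=1$, since then $k=0$ and the $v=0$ subcase needs $\Tr_2^q(1)=0$, both impossible for $m$ odd. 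The trade-off: the paper's route is shorter but leans on the full degree-$5$ classification; yours is longer but elementary, verifiable line by line, and the collision analysis generalizes to coefficients not covered by the normalized table.
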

\begin{proof}
We only prove for $\gamma\neq0$. Let $\alpha\in\mathbb{F}_{q^3}^*$ with $\Tr_q^{q^3}(\alpha)=0$ such that $\{1,\alpha,\alpha^q\}$ is a basis of $\mathbb{F}_{q^3}$ over $\mathbb{F}_q$. Let $X=x+y\alpha+z\alpha^q$ with $x,y,z\in\mathbb{F}_q$. Substituting Lemma \ref{lem3-4} (iii)-(iv) into $f(X)$ yields that \begin{align*}
f(X)=x+y\alpha+z\alpha^{q}+\gamma\left(x^3+x^5+\ell(y,z)\right),
\end{align*}
where $\ell(y,z)=\Tr_q^{q^3}((z\alpha+y\alpha^q+z\alpha^q)^2(y\alpha+z\alpha^q))+\Tr_q^{q^3}((z\alpha+y\alpha^q+z\alpha^q)^4(y\alpha+z\alpha^q))\in\mathbb{F}_q[y,z]$.
Now by Lemma \ref{lem3-1}, $f(X)$ is a PP of $\mathbb{F}_{q^3}$ if and only if, for each $y,z\in\mathbb{F}_q$,
\begin{equation*}g(x,y,z)=x+\gamma(x^5+x^3+\ell(y,z))\end{equation*}
is  a PP of $\mathbb{F}_q$. Given $\gamma\ne0$ and fixed $y,z\in\mathbb{F}_q$, it suffices to analyze the permutation property of $g'(x)=x^5+x^3+\gamma^{-1}x$. From Table 7.1 in \cite{Lidl}, we assert that $g'(x)$ is a PP of $\mathbb{F}_q$ if and only if $q\equiv\pm2\pmod5$ and $\gamma=5$. That is $m$ being odd and $\gamma=1$.
\end{proof}

\begin{theorem}
Let $q=2^m$  and $\gamma\in\mathbb{F}_q$. Then $f(X)=X+\gamma\Tr_q^{q^3}(X^{2q+2}+X^{4q+1})$ is a PP of $\mathbb{F}_{q^3}$ if and only if $\gamma=0$; or $\gamma=1$ and $4\nmid m$.
\end{theorem}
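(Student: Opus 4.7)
The plan is to mirror the proof structure of the preceding theorems in this subsection, reducing the univariate PP problem over $\mathbb{F}_{q^3}$ to a univariate PP problem over $\mathbb{F}_q$ via Lemma \ref{lem3-1}. The case $\gamma = 0$ gives $f(X) = X$ and is immediate, so throughout I may assume $\gamma \in \mathbb{F}_q^*$.

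First I would invoke Corollary \ref{basis} to select $\alpha \in \mathbb{F}_{q^3}^*$ with $\Tr_q^{q^3}(\alpha) = 0$ such that $\{1,\alpha,\alpha^q\}$ is an $\mathbb{F}_q$-basis of $\mathbb{F}_{q^3}$, and write $X = x + y\alpha + z\alpha^q$ with $x,y,z \in \mathbb{F}_q$. The key observation is that $\Tr_q^{q^3}(X^{2q+2})$ can be handled without any new computation: since the characteristic is $2$, squaring commutes with the trace, hence $\Tr_q^{q^3}(X^{2q+2}) = (\Tr_q^{q^3}(X^{q+1}))^2$, and Lemma \ref{lem3-4}(i) gives $\Tr_q^{q^3}(X^{2q+2}) = x^4 + (y^2+z^2+yz)^2(\Tr_q^{q^3}(\alpha^{q+1}))^2$. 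Combining this with Lemma \ref{lem3-4}(iv) yields an expression of the form $f(X) = x + y\alpha + z\alpha^q + \gamma(x^5 + x^4 + \ell(y,z))$, with $\ell(y,z) \in \mathbb{F}_q[y,z]$ independent of $x$. Lemma \ref{lem3-1} then implies that $f$ is a PP of $\mathbb{F}_{q^3}$ if and only if $x + \gamma(x^5 + x^4)$ permutes $\mathbb{F}_q$, and by Lemma \ref{constant} this is equivalent to $h(x) = x^5 + x^4 + \gamma^{-1}x$ being a PP of $\mathbb{F}_{2^m}$.

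The next step is to normalize $h$: substituting $x = y + 1$, a short computation in characteristic $2$ (using $(y+1)^4 = y^4+1$ and $(y+1)^5 = y^5 + y^4 + y + 1$) gives $h(y+1) = y^5 + (\gamma^{-1}+1)y + \gamma^{-1}$, so $h$ permutes $\mathbb{F}_{2^m}$ iff $y^5 + by$ does, where $b = \gamma^{-1}+1$. The main obstacle is to prove that $y^5 + by$ cannot be a PP of $\mathbb{F}_{2^m}$ whenever $b \neq 0$. To this end I would write $b = c^4$ with $c \in \mathbb{F}_{2^m}^*$, valid since the Frobenius map is bijective in characteristic $2$; then $y^5 + by = y(y^4 + c^4) = y(y+c)^4$, which vanishes at both $y = 0$ and $y = c$, hence is not injective. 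Therefore $b = 0$ is forced, i.e., $\gamma = 1$.

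Finally, for $\gamma = 1$ the polynomial reduces to $y^5$, which permutes $\mathbb{F}_{2^m}$ if and only if $\gcd(5, 2^m - 1) = 1$. Since the multiplicative order of $2$ modulo $5$ is $4$, this condition is equivalent to $4 \nmid m$. Equivalently, the same conclusion can be read off directly from Table 7.1 of \cite{Lidl}, matching the style used in the preceding theorems of this subsection. Assembling the two cases yields the stated equivalence.
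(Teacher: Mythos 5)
Your proposal is correct and follows essentially the same route as the paper: the same reduction via Corollary \ref{basis}, Lemma \ref{lem3-1} and Lemma \ref{lem3-4}(i),(iv), using $\Tr_q^{q^3}(X^{2q+2})=\bigl(\Tr_q^{q^3}(X^{q+1})\bigr)^2$ to arrive at the univariate polynomial $x^5+x^4+\gamma^{-1}x$ over $\mathbb{F}_q$. The only difference is at the very end, where the paper reads the answer off Table 7.1 of Lidl--Niederreiter, whereas you make the normalization $x\mapsto y+1$ explicit and then dispose of $y^5+by$ with $b\neq 0$ by the self-contained factorization $y^5+by=y(y+c)^4$ with $b=c^4$ --- a nice touch that avoids relying on the completeness of that table, but not a different method.
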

\begin{proof} We only prove for $\gamma\neq0$. Let $\alpha\in\mathbb{F}_{q^3}^*$ with $\Tr_q^{q^3}(\alpha)=0$ such that $\{1,\alpha,\alpha^q\}$ is a basis of $\mathbb{F}_{q^3}$ over $\mathbb{F}_q$. Let $X=x+y\alpha+z\alpha^q$ with $x,y,z\in\mathbb{F}_q$. We have
\begin{align*}f(X)=&X+\gamma\left(\left(\Tr_q^{q^3}(X^{q+1})\right)^2+\Tr_q^{q^3}(X^{4q+1})\right)\\
&=x+y\alpha+z\alpha^q+\gamma(x^5+x^4+\ell(y,z)),\end{align*}
where $\ell(y,z)=\left((y^2+z^2+yz)\Tr_q^{q^3}(\alpha^{q+1})\right)^2+\Tr_q^{q^3}\left((z\alpha+y\alpha^q+z\alpha^q)^4(y\alpha+z\alpha^q)\right)\in\mathbb{F}_q[y,z]$. By Lemma \ref{lem3-1}, the problem now reduces to analyzing the permutation behaviour of
\begin{equation*}g(x,y,z)=x+\gamma\left(x^5+x^4+\ell(y,z)\right),\end{equation*}
over $\mathbb{F}_q$, where $y$ and $z$ run through $\mathbb{F}_q$. Rewriting this expression, we obtain
\begin{equation*}\left(g(x,y,z)-\gamma\ell(y,z)\right)\gamma^{-1}=x^5+x^4+\gamma^{-1}x,\end{equation*}
 whose normalized form is $x^5+(1+\gamma^{-1})x$. Clearly by Table 7.1 in \cite{Lidl}, this polynomial is a PP of $\mathbb{F}_q$ if and only if $\gamma=1$ and $q\not\equiv1\pmod5$, i.e., $\gamma=1$ and $4\nmid m$. Therefore, $f(X)$ is a PP of $\mathbb{F}_{q^3}$ if and only if  $\gamma=1$ and $4\nmid m$.
\end{proof}

\begin{theorem}\label{th3.9}
 Let $q=2^m$ with $m\not\equiv2\pmod3$ and $\gamma\in\mathbb{F}_q$.  Then $f(X)=X+\gamma\Tr_q^{q^3}(X^{\frac{q+1}{2}}+X^{\frac{q+3}{2}})$ is a PP of $\mathbb{F}_{q^3}$ if and only if $\gamma=0$ or $\gamma=1$.
\end{theorem}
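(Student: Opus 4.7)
The plan is to follow the template established by Theorem~\ref{26-1} and its successors: handle $\gamma=0$ trivially, reduce $\gamma\neq 0$ to a univariate analysis via the substitution $F(X):=f(X^2)$, extract a necessary condition at $(y,z)=(0,0)$, and appeal to \cite{LikqQulj2026} for the sufficiency when $\gamma=1$.

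For $\gamma\neq 0$, since $X\mapsto X^2$ is a bijection on $\mathbb{F}_{q^3}$, it suffices to study $F(X)=X^2+\gamma\Tr_q^{q^3}(X^{q+1}+X^{q+3})$. I would fix $\alpha\in\mathbb{F}_{q^3}^*$ via Corollary~\ref{basis} with $\Tr_q^{q^3}(\alpha)=0$ and $\{1,\alpha,\alpha^q\}$ a basis of $\mathbb{F}_{q^3}$ over $\mathbb{F}_q$, so that $\{1,\alpha^2,\alpha^{2q}\}$ is also a basis. Writing $X=x+y\alpha+z\alpha^q$, we have $X^2=x^2+y^2\alpha^2+z^2\alpha^{2q}$, and plugging Lemma~\ref{lem3-4}(i) and (v) into $F(X)$ shows that the $\alpha^2$- and $\alpha^{2q}$-coordinates in the basis $\{1,\alpha^2,\alpha^{2q}\}$ are $y^2$ and $z^2$, both bijections of $\mathbb{F}_q$. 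The reasoning behind Lemma~\ref{lem3-1} (together with Lemma~\ref{constant} to absorb the $x$-independent constant) then shows that $F$ is a PP of $\mathbb{F}_{q^3}$ if and only if, for every $(y,z)\in\mathbb{F}_q^2$,
\begin{equation*}
\tilde g(x)=\gamma x^4+\bigl[(1+\gamma)+\gamma(y^2+z^2+yz)\Tr_q^{q^3}(\alpha^{q+1})\bigr]x^2+\gamma\,\ell_{51}(y,z)\,x
\end{equation*}
is a PP of $\mathbb{F}_q$.

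For necessity, I would specialize to $(y,z)=(0,0)$, where $\tilde g(x)=x^2\bigl(\gamma x^2+(1+\gamma)\bigr)$. Since every element of $\mathbb{F}_q$ has a unique square root in $\mathbb{F}_q$ in characteristic $2$, this $\mathbb{F}_2$-linear polynomial has trivial kernel only when $(1+\gamma)/\gamma=0$, forcing $\gamma=1$.

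For sufficiency when $\gamma=1$, I would invoke the identity $\Tr_q^{q^3}(X^{qs})=\Tr_q^{q^3}(X^s)$ to rewrite $f(X)$ as $X+\Tr_q^{q^3}\bigl(X^{(q^2+q)/2}+X^{(q^2+3q)/2}\bigr)$, which is entry No.~6 of Table~\ref{table1}; this polynomial is shown to permute $\mathbb{F}_{q^3}$ whenever $m\not\equiv 2\pmod 3$ in \cite{LikqQulj2026}. The main obstacle is this sufficiency step, which is delegated to the cited result; everything else is a routine instantiation of the multivariate reduction developed earlier in the section.
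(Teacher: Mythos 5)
Your proposal is correct and follows essentially the same route as the paper: reduce to $F(X)=f(X^2)$, expand in the basis $\{1,\alpha,\alpha^q\}$ via Lemma \ref{lem3-4}(i),(v) and Lemma \ref{lem3-1}, obtain the necessity of $\gamma\in\{0,1\}$ from the specialization $y=z=0$ (where $\gamma x^4+(1+\gamma)x^2$ has nontrivial kernel unless $\gamma=1$), and delegate the sufficiency for $\gamma=1$ to \cite[Theorem 4.3]{LikqQulj2026}. Your explicit identification of $f$ (for $\gamma=1$) with entry No.~6 of Table \ref{table1} via $\Tr_q^{q^3}(X^{qs})=\Tr_q^{q^3}(X^s)$ is a slightly more careful justification of that last citation than the paper gives, but the argument is the same.
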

\begin{proof} It suffices to study the permutation behavior of $F(X)=f(X^2)$. We only prove for $\gamma\neq0$. Let $\alpha\in\mathbb{F}_{q^3}^*$ with $\Tr_q^{q^3}(\alpha)=0$ such that $\{1,\alpha,\alpha^q\}$ is a basis of $\mathbb{F}_{q^3}$ over $\mathbb{F}_q$. Let $X=x+y\alpha+z\alpha^q$ with $x,y,z\in\mathbb{F}_q$. By Lemma \ref{lem3-4}, $F(X)$ can be be expressed as
\begin{align*}
F(X)&=X^2+\gamma\Tr_q^{q^3}(X^{q+1}+X^{q+3})\\
&=x^2+y^2\alpha^2+z^2\alpha^{2q}+\gamma\left(x^2+x^4+x^2(y^2+z^2+yz)\Tr_q^{q^3}(\alpha^{q+1})+x\ell_{51}(y,z)+\ell(y,z)\right),\end{align*}
where $\ell(y,z)=(y^2+z^2+yz)\Tr_{q}^{q^3}(\alpha^{q+1})+\ell_{52}(y,z)\in\mathbb{F}_{q}[y,z]$, and $\ell_{51}(y,z)$, $\ell_{52}(y,z)$ are as defined in Lemma \ref{lem3-4}.  Applying Lemma \ref{lem3-1}, $F(X)$ is a PP of $\mathbb{F}_q$ if and only if for every $y,z\in\mathbb{F}_q$,
\begin{equation*}g(x,y,z)=x^2+\gamma\left(x^2+x^4+x^2(y^2+z^2+yz)\Tr_q^{q^3}(\alpha^{q+1})+x\ell_{51}(y,z)+\ell(y,z)\right)\end{equation*}
is a PP of $\mathbb{F}_q$. We now consider the case where $y=z=0$. Given $\gamma\neq0$, it is clear that $g(x,0,0)=\gamma x^4+(\gamma+1)x^2$ permutes  $\mathbb{F}_q$ if and only if $\gamma=1$.
 It remains to prove that for $\gamma=1$, $F(X)$ is indeed a PP of $\mathbb{F}_q$. According to \cite[Theorem 4.3]{LikqQulj2026}, we complete the proof.
\end{proof}

\begin{theorem}\label{26-Li}
Let $q=2^m$ with $m\not\equiv1\pmod3$ and $\gamma\in\mathbb{F}_q$.  Then $f(X)=X+\gamma\Tr_q^{q^3}(X^{\frac{q+1}{2}}+X^{\frac{3q+1}{2}})$ is a PP of $\mathbb{F}_{q^3}$ if and only if $\gamma=0$ or $\gamma=1$.
\end{theorem}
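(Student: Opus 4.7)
The plan is to follow the proof of Theorem \ref{th3.9} almost verbatim, with part (vi) of Lemma \ref{lem3-4} replacing part (v). The case $\gamma = 0$ gives $f(X) = X$ and is trivial, so I assume $\gamma \neq 0$. Because $X \mapsto X^2$ is a bijection of $\mathbb{F}_{q^3}$ in characteristic $2$, $f(X)$ is a PP if and only if
\begin{equation*}
F(X) := f(X^2) = X^2 + \gamma\Tr_q^{q^3}(X^{q+1} + X^{3q+1})
\end{equation*}
is. Applying Corollary \ref{basis}, I would choose $\alpha \in \mathbb{F}_{q^3}^*$ with $\Tr_q^{q^3}(\alpha) = 0$ and $\{1,\alpha,\alpha^q\}$ an $\mathbb{F}_q$-basis of $\mathbb{F}_{q^3}$, and write $X = x + y\alpha + z\alpha^q$ with $x,y,z \in \mathbb{F}_q$. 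Substituting parts (i) and (vi) of Lemma \ref{lem3-4} yields
\begin{equation*}
F(X) = x^2 + y^2\alpha^2 + z^2\alpha^{2q} + \gamma\bigl(x^4 + x^2 + x^2(y^2 + z^2 + yz)\Tr_q^{q^3}(\alpha^{q+1}) + x\,\ell_{61}(y,z) + \ell(y,z)\bigr),
\end{equation*}
where $\ell(y,z) = (y^2+z^2+yz)\Tr_q^{q^3}(\alpha^{q+1}) + \ell_{62}(y,z) \in \mathbb{F}_q[y,z]$ is independent of $x$. By Lemma \ref{lem3-1}, $F(X)$ is a PP of $\mathbb{F}_{q^3}$ if and only if, for every $y, z \in \mathbb{F}_q$, the polynomial
\begin{equation*}
g(x,y,z) = x^2 + \gamma\bigl(x^4 + x^2 + x^2(y^2+z^2+yz)\Tr_q^{q^3}(\alpha^{q+1}) + x\,\ell_{61}(y,z) + \ell(y,z)\bigr)
\end{equation*}
is a PP of $\mathbb{F}_q$.

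For the necessity, I would specialize to $y = z = 0$. Since $\ell_{61}(0,0) = \ell_{62}(0,0) = 0$, this reduces to
\begin{equation*}
g(x,0,0) = \gamma x^4 + (\gamma + 1) x^2 = \bigl(\sqrt{\gamma}\,x^2 + \sqrt{\gamma + 1}\,x\bigr)^2,
\end{equation*}
which permutes $\mathbb{F}_q$ if and only if $\sqrt{\gamma}\,x^2 + \sqrt{\gamma+1}\,x$ does (squaring is bijective on $\mathbb{F}_q$). Since $\gamma \neq 0$, this last linearized polynomial has the root $x = \sqrt{(\gamma+1)/\gamma}$, which is nonzero exactly when $\gamma \neq 1$; hence $\gamma = 1$ is forced.

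For the sufficiency at $\gamma = 1$, the Frobenius-invariance $\Tr_q^{q^3}(X^a) = \Tr_q^{q^3}(X^{aq})$ of the trace shows that $f(X)$ coincides with entry $7$ of Table \ref{table1}, and the permutation property under $m \not\equiv 1 \pmod 3$ is supplied by the corresponding theorem of \cite{LikqQulj2026}. The main (and only real) obstacle is this imported sufficiency at $\gamma = 1$; once that is in hand, the multivariate reduction above disposes of every other $\gamma$ cleanly, and the proof is otherwise entirely parallel to that of Theorem \ref{th3.9}.
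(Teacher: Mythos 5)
Your proposal is correct and follows exactly the route the paper intends: the paper's own proof of this theorem is just the remark ``by a similar argument to Theorem \ref{th3.9},'' and your write-up supplies precisely that argument, substituting Lemma \ref{lem3-4}(vi) for (v), forcing $\gamma\in\{0,1\}$ via the specialization $y=z=0$, and importing the $\gamma=1$ sufficiency from \cite{LikqQulj2026} through the identity $\Tr_q^{q^3}(X^a)=\Tr_q^{q^3}(X^{aq})$. No gaps.
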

\begin{proof}
By a similar argument to the proof of Theorem \ref{th3.9}, the result follows; we omit the details here.
\end{proof}

\begin{remark}
Theorems \ref{26-1}-\ref{26-Li} show the polynomials listed in Table \ref{table1}, which are of the form  $X+\gamma\Tr_q^{q^3}(h(X))$ with $\gamma=1$,  cannot be PPs for any $\gamma\in\mathbb{F}_q^*\backslash\{1\}$.
\end{remark}

Subsequently, we aim to characterize further families of permutation polynomials over $\mathbb{F}_{q^3}$.

\begin{theorem}
Let $q=2^m$ and $\gamma\in\mathbb{F}_q$. Then the polynomial \begin{equation*}f(X)=X+\gamma\Tr_q^{q^3}(X^{\frac{q+1}{2}}+X^{\frac{q^2+q+2}{2}}+X^{\frac{q+3}{2}}+X^{\frac{3q+1}{2}})\end{equation*}
is a PP of $\mathbb{F}_{q^3}$ if and only if $\gamma=0$.
\end{theorem}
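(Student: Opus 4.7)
The plan is to follow the multivariate reduction used throughout this section. The direction $\gamma=0\Rightarrow f=X$ is trivial, so I focus on $\gamma\ne 0$. By Lemma \ref{constant}, since $X\mapsto X^2$ is bijective on $\mathbb{F}_{q^3}$ in characteristic 2, $f$ is a PP iff $F(X):=f(X^2)=X^2+\gamma\Tr_q^{q^3}(X^{q+1}+X^{q^2+q+2}+X^{q+3}+X^{3q+1})$ is a PP. Fix $\alpha$ from Corollary \ref{basis} and write $X=x+y\alpha+z\alpha^q$. Applying Lemma \ref{lem3-4}(i), (ii), (v), (vi) and summing in characteristic 2 --- where the three $x^4$-contributions from (ii), (v), (vi) collapse to a single $x^4$ and the three copies of $x^2(y^2+z^2+yz)\Tr_q^{q^3}(\alpha^{q+1})$ likewise collapse to one --- gives
\[
\Tr_q^{q^3}(X^{q+1}+X^{q^2+q+2}+X^{q+3}+X^{3q+1})=x^4+x^2+A(y,z)\,x^2+M(y,z)\,x+N(y,z),
\]
where $A(y,z)=(y^2+z^2+yz)\Tr_q^{q^3}(\alpha^{q+1})$ and $M(y,z)=B_1+\ell_{51}+\ell_{61}$ in the notation of Lemma \ref{lem3-4}. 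Writing $F(X)$ in the basis $\{1,\alpha^2,\alpha^{2q}\}$ and invoking Lemma \ref{pang-yuan-wu-guan} (with $y\mapsto y^2$ and $z\mapsto z^2$ bijective on $\mathbb{F}_q$) reduces the PP property of $F$ to the requirement that, for every $(y,z)\in\mathbb{F}_q^2$, $g(x,y,z)=\gamma x^4+(1+\gamma+\gamma A)x^2+\gamma M\,x+\gamma N$ permutes $\mathbb{F}_q$ in $x$.

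Setting $y=z=0$ yields $g(x,0,0)=\gamma x^4+(1+\gamma)x^2=\gamma x^2\bigl(x^2+(1+\gamma)/\gamma\bigr)$, which has the two distinct roots $0$ and $\sqrt{(1+\gamma)/\gamma}$ whenever $\gamma\ne 0,1$, ruling out every $\gamma\in\mathbb{F}_q^*\setminus\{1\}$. To exclude $\gamma=1$, the key identity is $M(y,z)\equiv 0$. Writing $P=y\alpha+z\alpha^q$, Lemma \ref{lem3-4} gives $B_1=\Tr_q^{q^3}(P^3)$, $\ell_{51}=\Tr_q^{q^3}(P^{q+2}+P^3)$ and $\ell_{61}=\Tr_q^{q^3}(P^{2q+1}+P^{3q})$, and the Frobenius-invariance $\Tr_q^{q^3}(P^{3q})=\Tr_q^{q^3}(P^3)$ collapses these in characteristic 2 to $M=\Tr_q^{q^3}(P^3+P^{q+2}+P^{2q+1})$. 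Since $\Tr_q^{q^3}(\alpha)=0$ forces $e_1:=P+P^q+P^{q^2}=0$, Newton's identity gives $\Tr_q^{q^3}(P^3)=3e_3=\N(P)$, while the substitution $P^{q^2}=P+P^q$ (valid in characteristic 2 from $e_1=0$) gives $P^{q+2}+P^{2q+1}=PP^q(P+P^q)=PP^qP^{q^2}=\N(P)$; hence $M=\N(P)+3\N(P)=0$.

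With $M\equiv 0$, the case $\gamma=1$ reduces to analyzing $g(x,y,z)=x^4+A(y,z)\,x^2+N(y,z)$. Taking $(y,z)=(1,0)$ gives $A(1,0)=\Tr_q^{q^3}(\alpha^{q+1})$, and a quick degree argument shows $A(1,0)\ne 0$: if it were zero, the minimal polynomial of $\alpha$ would be $X^3+\N(\alpha)$, so $\alpha^q\in\{\alpha,\omega\alpha,\omega^2\alpha\}$ for a primitive cube root of unity $\omega\in\mathbb{F}_{q^3}$; but $[\mathbb{F}_q(\omega):\mathbb{F}_q]\in\{1,2\}$ must divide $3$, forcing $\omega\in\mathbb{F}_q$, whereupon $\alpha^q$ would be an $\mathbb{F}_q$-multiple of $\alpha$, contradicting the basis property of $\{1,\alpha,\alpha^q\}$. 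Consequently $g(x,1,0)=(x^2+\sqrt{A(1,0)}\,x)^2+N(1,0)$ takes the value $N(1,0)$ at both $x=0$ and $x=\sqrt{A(1,0)}$, violating injectivity and excluding $\gamma=1$. The main obstacle is the symmetric-function computation $M\equiv 0$; once this identity is in hand, the rest is a direct linearized-polynomial argument.
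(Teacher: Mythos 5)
Your proof is correct, and it follows the paper's overall framework (pass to $F=f(X^2)$, expand in the basis $\{1,\alpha,\alpha^q\}$ via Lemma \ref{lem3-4}, reduce to a one-variable condition, and kill $\gamma\in\mathbb{F}_q^*\setminus\{1\}$ by the specialization $y=z=0$). Where you diverge is the exclusion of $\gamma=1$: the paper abandons the multivariate picture at that point, substitutes $X=a+u$ into $F(X)=a^2$, and observes that the resulting equation factors through $u\mapsto u^2$ as $v^2+Av+B=0$ with $A=\Tr_q^{q^3}(a^{q+1}+a^{2q})$, hence has $0$ or $2$ roots once $A\neq0$; you instead stay fiber-wise over $\mathbb{F}_q$, prove the identity $M(y,z)\equiv0$ by symmetric functions (using $\Tr_q^{q^3}(P)=0$, Newton's identity for $p_3$, and $P^{q+2}+P^{2q+1}=PP^qP^{q^2}=\N(P)$), and exhibit the $2$-to-$1$ collision of $x^4+A(y,z)x^2+N(y,z)$ at $(y,z)=(1,0)$. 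The two arguments hinge on the same phenomenon --- the vanishing of the linear coefficient leaves a non-injective additive square --- but yours makes explicit a point the paper leaves implicit: that the relevant coefficient $A$ is not identically zero. Your verification that $\Tr_q^{q^3}(\alpha^{q+1})\neq0$ (via the minimal polynomial $X^3+\N(\alpha)$ and the cube-root-of-unity degree argument) is sound and actually closes a small logical gap in the published proof, which asserts non-permutation from the behavior at those $a$ with $A\neq0$ without confirming such $a$ exist.
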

\begin{proof}
It suffices to study the permutation behavior of $F(X)=f(X^2)$. We only prove the case for $\gamma \neq 0$. Choose  an element $\alpha \in \mathbb{F}_{q^3}^*$ with $\Tr_q^{q^3}(\alpha) = 0$ such that $\{1, \alpha, \alpha^q\}$ forms a basis of $\mathbb{F}_{q^3}$ over $\mathbb{F}_q$. Express $X = x + y\alpha + z\alpha^q$ with $x, y, z \in \mathbb{F}_q$. By Lemma \ref{lem3-4}, we have
\begin{align*}
F(X)&=X^2+\gamma\Tr_q^{q^3}(X^{q+1}+X^{q^2+q+2}+X^{q+3}+X^{3q+1})\\
&= x^2 + y^2\alpha^2 + z^2\alpha^{2q}+\gamma\left(x^4+x^2+x^2(y^2+z^2+yz)\Tr_q^{q^3}(\alpha^{q+1})\right)\\
      &~~~~
  +\gamma x\left(\Tr_q^{q^3}((y\alpha+z\alpha^q)^3)+\ell_{51}(y,z)+\ell_{61}(y,z)\right)+\gamma\ell(y,z),
\end{align*}
where $\ell(y,z)=(y^2+z^2+yz)\Tr_q^{q^3}(\alpha^{q+1})+\ell_{52}(y,z)+\ell_{62}(y,z)\in\mathbb{F}_q[y,z]$ is a constant with respect to $x$. By Lemmas \ref{constant} and \ref{lem3-1}, $F(X)$ is a PP of $\mathbb{F}_{q^3}$ if and only if for each $y,z\in\mathbb{F}_q$,
\begin{equation*}g(x,y,z)=x^2+\gamma\left(x^4+x^2+x^2(y^2+z^2+yz)\Tr_q^{q^3}(\alpha^{q+1})+x\left(\Tr_q^{q^3}((y\alpha+z\alpha^q)^3)+\ell_{51}(y,z)+\ell_{61}(y,z)\right)\right)\end{equation*}
is a PP of $\mathbb{F}_q$.  As a special case, setting $y=z=0$ gives
\begin{equation*}g(x,0,0)=x^2+\gamma(x^4+x^2),\end{equation*}
whose normalized form is $x^4+(1+\gamma^{-1})x^2$ as $\gamma\neq0$. It follows that $g(x,0,0)$ permutes $\mathbb{F}_q$ if and only if $\gamma=1$. This observation allows us to conclude that $F(X)$ cannot be a PP of $\mathbb{F}_{q^3}$ for any $\gamma \in \mathbb{F}_q^* \backslash\{1\}$.

We now consider the remaining case where $\gamma=1$.
 It suffices to analyze the number of solutions to the equation
\begin{equation}\label{eq8}
X^2+\Tr_q^{q^3}(X^{q+1}+X^{q^2+q+2}+X^{q+3}+X^{3q+1})=a^2\end{equation}
Substituting $X=a+u$ into (\ref{eq8}) yields
\begin{equation*}
u^4+\Tr_q^{q^3}(a^{q+1}+a^{2q})u^2+\Tr_q^{q^3}(a^{q+1}+a^{q^2+q+2}+a^{q+3}+a^{3q+1})=0,\end{equation*}
or equivalently,
\begin{equation*}
\left(u^2+\Tr_q^{q^3}(a^{q+1}+a^{2q})u+\Tr_q^{q^3}(a^{q+1}+a^{q^2+q+2}+a^{q+3}+a^{3q+1})\right)\circ u^2=0.\end{equation*}
Therefore, $F(X)$ with $\gamma=1$ is a PP of $\mathbb{F}_{q^3}$ if and only if for each $a\in\mathbb{F}_{q^3}$,
\begin{equation}\label{eq3} u^2+Au+B=0\end{equation}
has a unique solution in $\mathbb{F}_q$, where $A=\Tr_q^{q^3}(a^{q+1}+a^{2q})$ and $B=\Tr_q^{q^3}(a^{q+1}+a^{q^2+q+2}+a^{q+3}+a^{3q+1})$.
However, we note that for any $a\in\mathbb{F}_{q^3}$ such that $A\neq0$, Equation (\ref{eq3}) has either exactly two distinct roots or no roots in $\mathbb{F}_q$. Therefore, $F(X)$ with $\gamma=1$ cannot be a permutation of $\mathbb{F}_{q^3}$. We complete the proof.
\end{proof}

\section{Conclusion}

The objective of this paper is to provide a novel approach for constructing PPs of the form $L(X)+\gamma\Tr_q^{q^3}(h(X))$ over finite fields. The main idea of this paper is to reformulate the challenge of constructing univariate permutation polynomials over finite fields as the task of constructing multivariate permutations over $\mathbb{F}_{q}$-vector spaces, thereby offering a new pathway for analysis. It is worth noting that several classes of PPs discussed in this work are inspired by the work of \cite{LikqQulj2025, LikqQulj2026}.

Although our current study focuses on  $\mathbb{F}_{q^3}$ with even characteristic, Lemmas \ref{basisfromNB} and  \ref{pang-yuan-wu-guan} are established for the more general case. This naturally provides a promising direction for future research: constructing new PPs in finite fields of higher degree or odd characteristic by leveraging the techniques developed in this paper.


\textit{ \begin{funding}{\rm Pingzhi Yuan was supported by the National Natural Science Foundation of China (Grant No. 12171163, No. 12571003).
Danyao Wu was supported by the National Natural Science Foundation of China (Grants No. 12501006).}
\end{funding}}

\section*{Declarations}
\begin{conflict of interest} {\rm There is no conflict of interest.}
\end{conflict of interest}



\end{document}